\newtheorem{theor}{Theorem}[section]
\newtheorem{cor}[theor]{Corollary}
\newtheorem{lemma}[theor]{Lemma}
\newtheorem{prop}[theor]{Proposition}
\newtheorem*{question*}{Question}
\theoremstyle{definition}
\newtheorem{defn}{Definition}
\theoremstyle{remark}
\numberwithin{equation}{section}
\numberwithin{defn}{section}
\newcommand{\R}{\mathbb{R}}        
\newcommand{\RN}{\mathbb{R}^N}
\newcommand{\eps}{\varepsilon}
\newcommand{\dt}{\mathrm{d}t}
\newcommand{\ve}{\varepsilon}
\DeclareMathOperator{\Lap}{(-\Delta)} 
\newcommand{\Ds}{\Lap^{s}}
\newcommand{\Dsi}{\Lap^{-s}}
\newcommand{\Dsm}{\Lap^{s/2}}
\newcommand{\Dsd}{\Lap^{s_\delta}}
\newcommand{\Doh}{\Lap^{1/2}}
\renewcommand{\d}{\mathrm{d}}
\definecolor{darkblue}{rgb}{0.05, .05, .65}
\definecolor{darkgreen}{rgb}{0, .6, .2}
\definecolor{darkred}{rgb}{0.8,0,0}
\begin{document}

\title{ \bf Non-existence and instantaneous \\extinction of solutions for singular \\ nonlinear fractional diffusion equations}
\author{by Matteo Bonforte$^{\rm a,b}$, Antonio Segatti$^{\rm d,e}$, Juan Luis V\'azquez$^{\rm a,c}$}
\date{}

\maketitle

\begin{abstract}
We show non-existence of solutions of the Cauchy problem in $\RN$ for the
nonlinear parabolic equation involving fractional diffusion
$\partial_t u + \Ds \phi(u)= 0,$ with $0<s<1$  and very singular nonlinearities $\phi$ . More precisely, we prove that when  $\phi(u)=-1/u^n$ with $n>0$, or $\phi(u) = \log u$, and  we take nonnegative $L^1$ initial data, there is no (nonnegative) solution of the problem in any dimension $N\ge 2$. We find the range of non-existence when $N=1$ in terms of $s$ and $n$.  The range of exponents that we find for non-existence both for parabolic and elliptic equations are optimal. \normalcolor
Non-existence is then proved for more general nonlinearities $\phi$, and it is also extended to  the related elliptic problem of nonlinear nonlocal type: $u + \Ds \phi(u) = f$ with the same type of nonlinearity  $\phi$.
\end{abstract}

\vskip 1cm

\noindent {\bf Keywords.} Very singular diffusion equation, fractional Laplacian, non-existence of solutions,
singular elliptic problem.\\[.5cm]
{\sc Mathematics Subject Classification}. 35K55, 
35K67, 
26A33, 
35J60, 
35J75. 
\\

\vfill

\begin{itemize}
[leftmargin=5mm]\setlength\itemsep{-2pt}
\item[(a)] Departamento de Matem\'{a}ticas, Universidad
Aut\'{o}noma de Madrid,\\ Campus de Cantoblanco, 28049 Madrid, Spain
\item[(b)] e-mail address:~\texttt{matteo.bonforte@uam.es }\\ web-page:~\texttt{http://www.uam.es/matteo.bonforte}
\item[(c)] e-mail address:~\texttt{juanluis.vazquez@uam.es }\\ web-page:~\texttt{http://www.uam.es/juanluis.vazquez}
\item[(d)] Dipartimento di Matematica ``F. Casorati'' Universit\'a di Pavia,\\ Via Ferrata 1, 27100 Pavia, Italy
\item[(e)] e-mail address:~\texttt{antonio.segatti@unipv.it}\\ web-page:~\texttt{http://www-dimat.unipv.it/segatti}
\end{itemize}

\newpage

\section{Introduction}

In this paper we discuss the question of non-existence of solutions of a class of nonlinear parabolic equations involving fractional diffusion and singular nonlinearities of the so-called ultra-fast diffusion type. These equations have the form
\begin{equation}\label{geq.Phi}
\partial_t u + \Ds \phi(u)= 0,
\end{equation}
where $\phi:\R_+\to \R$ is a monotone nondecreasing function of $u$ with
a singularity in $u=0$, namely such that $\phi(0+)= -\infty$. Consequently, nonnegative data and solutions are considered. On the other hand,  $\Ds$ is the fractional Laplacian operator
with $0<s<1$, that is the nonlocal operator defined,
at least for functions in the Schwartz class $\mathcal{S}$, by
\begin{equation}
\label{eq:fract_lap}
\Ds v (x) = c(N,s)\, \hbox{p.v.}\int_{\RN}\frac{v(x) - v(y)}{\vert x-y\vert^{N+2s}}\d y, \,\,\,\forall x\in \RN\,,
\end{equation}
see, e.g., \cite{land}. Here, $\hbox{p.v.}$ stands for principal value of the integral and $c(N,s)$ is a positive scaling constant, whose exact value is not important throughout this paper.

We consider the Cauchy problem posed in the whole space with initial data
\begin{equation}\label{id}
u(x, 0) = u_0(x)\,\,\, \hbox{ for } x\in \RN, \quad N\ge 1\,,
\end{equation}
and we assume that $u_0$ is nonnegative and integrable, which corresponds to the natural conditions assumed for a mass distribution that evolves in time by nonlinear nonlocal diffusion. Accordingly, we look for nonnegative solutions that are integrable in space for every time $t>0$.

The limit case $s=1$ of equation \eqref{geq.Phi} corresponds to the well-known model \color{darkblue} of \normalcolor nonlinear diffusion driven by the standard Laplacian, $\partial_t u =\Delta \phi(u)$, sometimes called Filtration Equation. This equation has been studied by numerous authors in the last century, see the early works \cite{BeTh72, CL71} and the monographs \cite{Vapme, JLVSmoothing} and their references. The well developed theory implies that given $u_0$ as above, if $\phi$ is continuous, strictly increasing and $\phi(0)=0$,\footnote{Actually, less stringent conditions on $\phi$ are acceptable.} then there exists a unique solution of the Cauchy problem, where solution is understood in some suitable generalized sense like mild solution. \normalcolor Moreover, the operator that assigns to
any initial condition its evolution is a semigroup of contractions
in $L^1_+(\RN)$ (or in $L^1(\RN)$ if two-signed solutions are considered). The set of admissible $\phi$ includes, in particular, all powers $\phi(u)=u^m$ for $m>0$, in which case the theory is more detailed. The use of mild solutions is usually changed into other familiar concepts, like weak or strong solutions, with better functional properties. Uniqueness theorems are proved to make such concepts useful.

The extension of this theory to the fractional Laplacian model \eqref{geq.Phi} has been done recently in a series of papers \cite{pqrv1, pqrv2, pqrv3, VPQR13} for convenient choices of the nonlinearity $\phi$. The first two references treat the case where $\phi(u)$ is a positive power and construct a contraction semigroup with good properties, the third treats a case of logarithmic nonlinearity, while the last one treats the case of a more general smooth function $\phi$, and then proves that when $\phi'(u)>0$ for all $u$ (i.\,.e., $\phi$ is non-degenerate) the nonnegative solutions are indeed positive and $C^\infty$ smooth.  The quantitative analysis of positive solutions was then pursued in \cite{BV2012} in the form of regularity estimates, together with the study of existence and uniqueness of initial traces.

\medskip

\noindent {\sc Non-existence in singular diffusion.} The purpose of this paper is to show a phenomenon of non-existence of solutions of the Cauchy Problem \eqref{geq.Phi}-\eqref{id} when the nonlinearity $\phi$ is of singular type, more precisely when $\phi(u)$ is defined and increasing (and possibly smooth) for $u>0$, but $\phi(0+)=-\infty$. This forces us to work only with nonnegative solutions, a restriction that is kept throughout in the paper. This type of singular nonlinearities were first considered for the Filtration Equation and
then in a number of physical applications.
The prototypical choice for the nonlinearity is an inverse
power, that is $\phi(u)=cu^m$ for $m=-n\le 0$. As an example, when $m=-1$,
the equation rules the evolution of the temperature in the Penrose-Fife model for phase transition (see \cite{PeFe}
and \cite{SSZ12} and the references therein).

The first caveat is that one has to change the sign of the power, $c=-a<0$ if we want the resulting singular equation to be formally parabolic,
for then we can write
$\partial_t u = \Delta\phi(u)$  in the form
\begin{equation}\label{ufdq}
\partial_t u =\mbox{div}\,(D(u)\nabla u)\,, \quad \mbox{with} \ D(u)=anu^{-(n+1)}>0.
\end{equation}
Since the diffusivity $D(u)$ is infinite at $u=0$ we talk about singular diffusion or fast diffusion, and for $n>0$ (i.e., $m<0$) we call it  {\sl ultra-fast} diffusion or \textsl{very singular} diffusion, cf. \cite{JLVSmoothing}. \normalcolor There is no loss of generality in assuming that $an=1$ to simplify the form of the equation. The second caveat is that the case $m=0$ enters into this theory if we put $\phi(u)=\log(u)$, since then $D(u)=1/u$, cf. \cite{JLVSmoothing} and its references.

Very singular fast diffusion equations have curious properties, like the non-existence of solutions for reasonable classes of data that we are going to prove. The present work is motivated by the result  of \cite{Vaz92}, where it was  found in that no  solutions exist for the Cauchy Problem for equation \eqref{ufdq}  with nonnegative, integrable initial data in the following cases: if $N\ge 3$ for $m\le 0$, or if $N=2$ for $m<0$, or if $N=1$ for $m\le -1$, and besides the range is optimal (note that we write $m=-n$). And the nonexistence applies to any time interval, however small, we have a phenomenon of {\sl instantaneous extinction} or {\sl extinction in zero time}. It must be noted that the problem with existence occurs exclusively because of the singular level $u=0$. Thus, solutions of this problem do exist for data that do not go to zero at infinity, or even for data in some $L^p(\RN)$, on the condition of not having a rapid decay as $|x|\to\infty$. Another situation in which solutions exist is the case of the equation posed in a bounded domain coupled with zero Neumann conditions or
even with dynamic boundary conditions (see \cite{SSZ13}).

One then wonders if the presence of fractional diffusion will affect the non-existence result, since in particular we cannot identify a diffusivity as the $D(u)$ in the standard Laplacian case; if the answer is yes, we want to identify  the techniques that enable us to  treat the problem and get optimal results. We will give answers to such questions below. Briefly stated, the phenomenon of non-existence of solutions still holds, and we find that the best approach to the proof is via limits of non-degenerate regularizations and the use of Riesz potentials. Moreover, when $\phi(u)$ has the form of an inverse power with exponent $n$, we get the optimal range of exponents $n$ for every dimension $N$ and fractional exponent $s$. And we extend the result to more general nonlinearities, though in a less sharp formulation.  We also obtain sharp non-existence results for elliptic problems.

The next section contains a statement of the problem, the technique of approximation by regularization, the concept of limit solution, and the statement of the main results with optimal ranges of application. Further outline of the contents of this paper will be given at the end of that section.

\section{Problem, limit solutions and main results}

We discuss the question of non-existence for the following class
of diffusion equations
\begin{equation}\label{eq:FrUF}
\begin{cases}
\partial_t u + \Ds {\phi(u)} = 0,\,\,\, \hbox{ in } Q\,,  \\
u(x, 0) = u_0(x)\,\,\, \hbox{ for } x\in \RN\,,
\end{cases}
\end{equation}
where  $Q:=\RN\times (0,T)$, with $T>0$ and $N\ge 1$. The nonlinear function is defined, increasing and smooth for $u>0$ with $\phi(s)\to -\infty$ as $s\to0$. More precisely, in this section and in most of the paper $\phi$  chosen from the list
\begin{equation}
\label{eq:nonlinearity}
\phi_n(u):=
\begin{cases}
-u^{-n},\,\,\,\hbox{ for } n>0,\\
\log u \,\,\,\,\hbox{ for } n =0.
\end{cases}
\end{equation}
We remind the reader that we will use throughout the notation $N$ for
the space dimension, and $n$ for the nonlinearity exponent ($m=-n$ is the more standard notation used when $\phi$ is not singular, \cite{Vapme}, but using $m<0$  is rather inconvenient).
The reason for which the case $m=0$ is rewritten in terms of the logarithm
nonlinearity is well-documented for classical diffusion $s=1$, where the equation
$$
\partial_t u =\Delta (\log u)\,,
$$
can be rigorously seen as the limit as $m\searrow 0$ of the  porous
medium equation
$$
\partial_t u +\Ds (u^m/m) = 0,\,\,\,\,m>0.
$$
cf. \cite{rev97}, \cite[Chapter 8]{JLVSmoothing} and also \cite{BF2012}. For $0<s<1$ similar results are obtained in \cite{pqrv3} and in the recent paper \cite{V-exist} for $N=1$.\normalcolor

Following \cite{Vaz92}, the strategy of proof of our nonexistence results is based on approximating problem
\eqref{eq:FrUF} with data $u_0\in L^1(\RN)$ by the family problems
\begin{equation}
\label{eq:FrUF_approx}
\begin{cases}
\partial_t u_\eps  + \Ds ({\phi(u_\eps)}) = 0, \qquad n>0,\\
u_\eps(x, 0) = u_0(x) + \eps\,\,\, \hbox{ for } x\in \RN,
\end{cases}
\end{equation}
so that we avoid data with values on the singular level $u=0$, so that any $\eps>0$ consider the
sequence $u_{0,\eps}:=u_0 +\eps$. Then, the standard theory must apply, see next section, and a classical solution $u_\ve(t,x)$  exists for all $\ve>0$ and $u_\ve\ge \ve$. Moreover, the maximum principle holds for these classical solutions and we have $u_\eps\ge u_{\eps'} $ for $\eps\ge \eps'>0$. Therefore, we will be allowed to take the monotone limit
\begin{equation}
\bar u(x,t)=\lim_{\eps\to 0} u_\eps(x, t)\,,
\end{equation}
see the proof of Proposition \ref{prop.limit} below for a rigorous justification. This function is a kind of generalized solution of the problem, that we will call the {\sl limit solution.} It is now an important step of the theory to decide in which  sense this is a solution of the equation in a more traditional functional sense (like weak, strong or viscosity solution) and also in which sense it takes the initial data. In cases of non-uniqueness of such solutions, the unique limit obtained by the above method has been called by various names: maximal solution, SOLA, proper solution,...

There is an extreme possibility that we will discuss in this paper:  in the limit $\ve\to0$ the family $u_\ve$ might converge to zero for any  $t>0$ and all $x\in\RN$, namely  $\bar u(x,t)\equiv 0$, and this is the plain statement of the non-existence theorem. For clarity of presentation, we split the results in the following two Theorems.

\begin{theor} \label{th:non ex}
Let $N\ge 2$, $n\ge 0$, and $0<s<1$. Then the limit solution $\bar u$ of \eqref{eq:FrUF} corresponding to any initial data any $u_0\in L^1_+(\RN)$ vanishes identically for all $x\in\RN$, $t>0$. More precisely, we have
\begin{equation}
\lim_ {\eps\to 0} u_\eps (x,t)=0,
\end{equation}
and the convergence is uniform in $\RN\times (\tau,\infty)$ if $u_0$ is bounded, while it happens  locally in $L^1(Q)$ for general $u_0\in L^1_+(\RN)$. More  precisely, for every ball $B\subset \RN$
\begin{equation}
\lim_{\eps\searrow 0}u_{\eps} = 0 \hbox{ in } L^1(B) \,\,\,\hbox{ uniformly in }t\ge \tau \hbox{ for any }\tau>0\,.
\end{equation}
\end{theor}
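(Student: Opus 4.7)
The plan is to apply the Riesz potential of order $2s$, exploiting the fact that $\phi(\eps)\to-\infty$ as $\eps\to 0$. Since $\phi$ is smooth and nondegenerate on $[\eps,\infty)$, the theory of \cite{pqrv1,pqrv2,VPQR13,BV2012} applied to \eqref{eq:FrUF_approx} provides, for each $\eps>0$, a unique classical solution $u_\eps\ge\eps$ with $u_\eps(\cdot,t)-\eps\in L^1(\RN)$ of conserved $L^1$ mass, monotone nonincreasing in $\eps$ by comparison. Let $\bar u$ denote the resulting pointwise decreasing limit.

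The core step is to introduce the Riesz potential
\[
V_\eps(x,t):=\mathcal I_{2s}\bigl(u_\eps(\cdot,t)-\eps\bigr)(x)=c_{N,s}\!\int_{\RN}\frac{u_\eps(y,t)-\eps}{|x-y|^{N-2s}}\,\d y,
\]
which is nonnegative and, since $N\ge 2>2s$, finite for a.e.\ $x$ by Hardy--Littlewood--Sobolev. Rewriting \eqref{eq:FrUF_approx} as $\partial_t(u_\eps-\eps)+\Ds[\phi(u_\eps)-\phi(\eps)]=0$ and applying $\mathcal I_{2s}$, which inverts $\Ds$ on $\phi(u_\eps)-\phi(\eps)$ because the latter vanishes at spatial infinity (as $u_\eps\to\eps$ there), yields the pointwise identity
\[
\partial_t V_\eps(x,t)=-\bigl[\phi(u_\eps(x,t))-\phi(\eps)\bigr].
\]
Since $\phi$ is increasing and $u_\eps\ge\eps$, the right-hand side is nonpositive; integrating in time and using $V_\eps\ge 0$ produces the decisive inequality
\[
0\;\le\;\int_0^t\!\bigl[\phi(u_\eps(x,s))-\phi(\eps)\bigr]\d s\;=\;V_\eps(x,0)-V_\eps(x,t)\;\le\;\mathcal I_{2s} u_0(x).
\]

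To derive the contradiction I would fix $x$ with $\mathcal I_{2s} u_0(x)<\infty$ and suppose $\bar u(x,\cdot)>0$ on a set $E\subset(0,t)$ of positive Lebesgue measure. On $E$ the monotonicity $u_\eps\ge\bar u(x,\cdot)>0$ combined with the explicit form of $\phi$ gives, for $n>0$,
\[
\phi(u_\eps(x,s))-\phi(\eps)=\eps^{-n}-u_\eps(x,s)^{-n}\ge\tfrac12\,\eps^{-n}\longrightarrow+\infty
\]
as $\eps\to 0$, and for $n=0$ one has $\log(u_\eps(x,s)/\eps)\to+\infty$ likewise. Fatou's lemma then forces the integral to diverge, contradicting the uniform bound $\mathcal I_{2s} u_0(x)<\infty$. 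Hence $\bar u=0$ almost everywhere on $\RN\times(0,\infty)$.

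It remains to upgrade this to the full statement. The uniform-in-$\eps$ regularity of $u_\eps$ on compacts (H\"older continuity with constants depending only on a uniform local $L^\infty$ bound, as supplied by \cite{BV2012}) lifts the a.e.\ vanishing to pointwise vanishing at every $(x,t)$. When $u_0\in L^\infty$, the bound $u_\eps\le\|u_0\|_\infty+\eps$ together with Dini's theorem on the monotone family $u_\eps\searrow 0$ gives uniform convergence on $\RN\times[\tau,\infty)$; for general $u_0\in L^1_+$, dominated convergence using $u_\eps\le u_{\eps_0}$ yields the $L^1(B)$ convergence, with uniformity in $t\ge\tau$ furnished by the same smoothing estimates. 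The principal technical obstacle is the rigorous justification of the identity $\partial_t V_\eps=-[\phi(u_\eps)-\phi(\eps)]$, that is, verifying that $\mathcal I_{2s}\circ\Ds$ acts as the identity on $\phi(u_\eps)-\phi(\eps)$; this requires sufficient spatial decay of $u_\eps-\eps$ at infinity, which is supplied by the quantitative theory of \cite{VPQR13,BV2012}.
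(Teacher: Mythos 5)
Your core step is essentially the paper's: you invert $\Ds$ by the Riesz potential (the paper's Lemma \ref{Lem.Green} is exactly the time-integrated version of your identity $\partial_t V_\eps=-[\phi(u_\eps)-\phi(\eps)]$, proved by testing with $\Dsi$ of a mollified delta), and you exploit that the resulting bound is uniform in $\eps$ while $-\phi(\eps)\to+\infty$. That part is sound, modulo the inversion justification you defer, which can be done as in the paper. The genuine gap is in how you convert the bound on $\int_0^t[\phi(u_\eps(x,s))-\phi(\eps)]\,\d s$ into the statements actually claimed in the theorem. Your Fatou/contradiction argument only yields $\bar u=0$ for a.e.\ $(x,t)$, and your upgrade to the stated conclusions does not work as written: Dini's theorem requires a compact domain, so it cannot give uniform convergence on $\RN\times[\tau,\infty)$; and the $\eps$-uniform local H\"older estimates you invoke from \cite{BV2012} are proved there for nonnegative powers $\phi(u)=u^m$, $m>0$, not for the singular nonlinearities $\phi(u)=-u^{-n}$, $\log u$, so equicontinuity of the family $u_\eps$ (with constants independent of $\eps$) is not available off the shelf. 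Likewise, the ``uniformly in $t\ge\tau$'' part of the $L^1(B)$ statement for general $u_0\in L^1_+$ is asserted via unspecified ``smoothing estimates'' rather than proved.

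The missing ingredient is precisely the time-monotonicity (Aronson--B\'enilan) estimate $\partial_t u_\eps\le u_\eps/((n+1)t)$, which is what the paper uses at this point: it turns the uniform divergence of the time integral $\int_{\tau^*}^{\tau}\phi(u_\eps(x,t))\,\d t$ (bounded below by $-(\tau-\tau^*)\phi(\eps)-C$ with $C$ uniform in $x$ when $u_0\in L^1\cap L^\infty$, since then $\sup_x\mathcal I_{2s}$ of the relevant differences is finite) into a pointwise bound on $\phi(u_\eps(x,\tau))$ at the single time $\tau$, uniformly in $x$; combined with the $L^\infty$-decay \eqref{eq:li_decay} this gives uniform convergence on $\RN\times(\tau,\infty)$, and the general $L^1$ case then follows by the density argument based on the $L^1$-contraction \eqref{eq:contraction}, which also yields the uniformity in $t\ge\tau$ of the $L^1(B)$ convergence. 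Without a substitute for this time-monotonicity step (or a valid equicontinuity argument for the singular nonlinearity), your proposal proves a strictly weaker statement than the theorem.
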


\medskip

The one-dimensional case is worth a separate statement since there appears a critical relation,  $2s-n=1$, to determine the range of existence or non-existence. It will require a special analysis.

\begin{theor}\label{th:non_ex_1D} Let $N\ge 1$, $n\ge 0$, and $0<s<1$. Then the limit solution $\bar u$ of \eqref{eq:FrUF} corresponding to any initial data for any $u_0\in L^1_+(\R)$ vanishes identically in the sense of the previous Theorem unless $1/2<s<1$ and $n<2s-1$, or $s=1/2$ and $n=0$.
This range of exponents is optimal.
\end{theor}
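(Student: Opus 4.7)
My plan mirrors the proof of Theorem \ref{th:non ex}: set $w_\eps := u_\eps-\eps \ge 0$ and $\Phi_\eps := \phi(u_\eps)-\phi(\eps) \ge 0$, so that (constants being in the kernel of $\Ds$) the regularised equation becomes
\begin{equation}
\partial_t w_\eps + \Ds \Phi_\eps = 0,
\end{equation}
and then try to bound $\int_0^t \Phi_\eps\,\d\tau$ pointwise by a finite quantity, which via Fatou forces $\bar u\equiv 0$ (because $\phi(u_\eps)\to\phi(\bar u)$ stays finite while $\phi(\eps)=-\eps^{-n}\to-\infty$, so $\Phi_\eps\to+\infty$ wherever $\bar u>0$). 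In one space dimension this Riesz-potential strategy goes through verbatim only when $2s<1$; a separate argument is required once $2s\ge 1$, which is what makes the one-dimensional case delicate and produces the threshold $n=2s-1$.

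\textbf{Regime A: $0<s<1/2$, and $s=1/2$ with $n>0$.} Here the 1D Riesz kernel $|x|^{2s-1}$ (respectively the logarithmic kernel at $s=1/2$) is essentially non-negative and locally integrable, so $\Dsi u_0$ is a.e.\ finite for $u_0\in L^1_+(\R)$ and $\Dsi w_\eps \ge 0$. Applying $\Dsi$ to the equation and integrating in time I would obtain
\begin{equation}
\int_0^t \Phi_\eps(x,\tau)\,\d\tau \;=\; \Dsi u_0(x) - \Dsi w_\eps(x,t) \;\le\; \Dsi u_0(x) < \infty\quad\text{a.e.\ }x,
\end{equation}
and the contradiction follows just as in Theorem \ref{th:non ex}. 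The borderline $s=1/2$ requires truncating the logarithmic kernel at radius $R$ (to cope with its sign change) and passing to the limit $R\to\infty$.

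\textbf{Regime B: $1/2<s<1$ and $n\ge 2s-1$.} Now the coefficient of $|x|^{2s-1}$ in the Riesz representation of $\Dsi$ on $\R$ switches sign (a ratio of Gamma functions changes sign as $2s$ crosses the dimension $N=1$) and the kernel grows at infinity, so the inequality above loses its useful direction. I would replace it by a weighted first-moment identity based on $\psi(x)\sim |x|^{2s}$, smoothed at the origin and cut off at infinity: by scaling $\Ds|x|^{2s}$ is a constant with definite sign, so
\begin{equation}
\frac{d}{dt}\int w_\eps(x,t)\,\psi(x)\,\d x \;=\; c_s\int \Phi_\eps(x,t)\,\d x,\qquad c_s>0.
\end{equation}
The equation is scale invariant precisely at $n=2s-1$ and supercritical (in the non-existence sense) for $n>2s-1$, as the scaling $u_\lambda(x,t)=\lambda u(\lambda x,\lambda^{2s-1-n}t)$ preserves mass and makes the diffusion "infinitely fast" as $\lambda\to\infty$ whenever $n\ge 2s-1$. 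Combining the forced blow-up of the right-hand side where $\bar u>0$ with $L^1$-conservation of $w_\eps$ and an a priori bound on the initial weighted mass $\int u_0\psi$ (first for compactly-supported $u_0$, then for general $u_0\in L^1_+$ by approximation) yields the contradiction. The principal obstacle lies exactly at the critical line $n=2s-1$: making the identity $\Ds\psi\equiv -c_s$ rigorous through the regularisation, controlling the cutoff error uniformly in $\eps$, and probably inserting a logarithmic correction at equality. Optimality of the range — existence of non-trivial limit solutions in the complementary region $1/2<s<1,\ n<2s-1$ and at $s=1/2,\ n=0$ — I would establish by constructing self-similar Barenblatt-type profiles using the existence theory of \cite{pqrv2,pqrv3,VPQR13}; their spreading rate $t^{1/(2s-1-n)}$ is exactly the one dictated by the scaling above, confirming sharpness.
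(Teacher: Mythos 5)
Your Regime A for $0<s<1/2$ matches the paper (it is just Proposition \ref{th:noex1}, which applies whenever $N>2s$), but beyond that the proposal has genuine gaps precisely where the one-dimensional difficulty lies. First, your treatment of $s=1/2$, $n>0$ rests on the claim that the logarithmic kernel is ``essentially non-negative'' so that $\Dsi w_\eps\ge 0$; this is false ($-\log|x|$ changes sign at $|x|=1$), the inverse Riesz potential is not a positive operator here, and the announced truncation at radius $R$ is not substantiated -- the tail error involves the conserved mass and does not vanish as $R\to\infty$. The paper circumvents exactly this obstacle by a different device: it approximates $s=1/2$ by $s_\delta\searrow 1/2$, uses continuity of the solution map with respect to $s$ (Lemma \ref{lemma:onehalf}) and the fact that the constant in the lower bound \eqref{eq:le} is independent of $s$, then passes to the limit in $\delta$.

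Second, and more seriously, your Regime B ($1/2<s<1$, $n\ge 2s-1$) is a weighted-moment (virial) sketch that cannot be closed as stated: with $\psi\sim|x|^{2s}$ you need a bound on $\int w_\eps(t)\,\psi\,\d x$ uniform in $\eps$ and in the cutoff, but this moment is not controlled by the $L^1$ data -- fractional diffusion produces power-law tails of order $|x|^{-(1+2s)}$, for which $\int |x|^{2s}\,|x|^{-(1+2s)}\d x$ already diverges -- so the identity $\frac{\d}{\d t}\int w_\eps\psi = c_s\int\Phi_\eps$ gives no contradiction without an a priori tail estimate you do not provide; moreover you yourself leave the critical line $n=2s-1$ open, although it belongs to the non-existence range claimed by the theorem. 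The paper's argument is different and covers the critical case seamlessly: a Green-function identity (Lemma \ref{Lem.Green.1}) with the growing kernel $|x|^{2s-1}$, combined with $\bigl||A|^\alpha-|B|^\alpha\bigr|\le|A-B|^\alpha$ and the uniform $L^1$ bound on $u_\eps(\cdot,\bar t)-u_\eps(\cdot,\tau)$, gives the H\"older bound $|W_\eps(x)-W_\eps(0)|\le C|x|^{2s-1}$ uniformly in $\eps$; the Aronson--B\'enilan estimate then converts this into the pointwise lower bound \eqref{eq:le}, whose limit decays like $|x|^{-(2s-1)/n}$, which fails to be integrable exactly when $n\ge 2s-1$, contradicting $\bar u(\cdot,\tau)\in L^1(\R)$. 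Finally, for optimality you invoke the existence theory of \cite{pqrv2,pqrv3,VPQR13}, but those papers treat $m>0$ and do not yield Barenblatt-type solutions for the singular range $m=-n\le 0$; the paper instead uses the explicit solution $U(x,t)=2(T-t)/(1+|x|^2)$ plus comparison for $s=1/2$, $n=0$, and the construction of \cite{V-exist} for $1/2<s<1$, $n<2s-1$.
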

The non-existence picture in one dimension is given in Figure \ref{fig.1}.
\begin{figure}[!]
\centering
    \includegraphics[width=15cm]{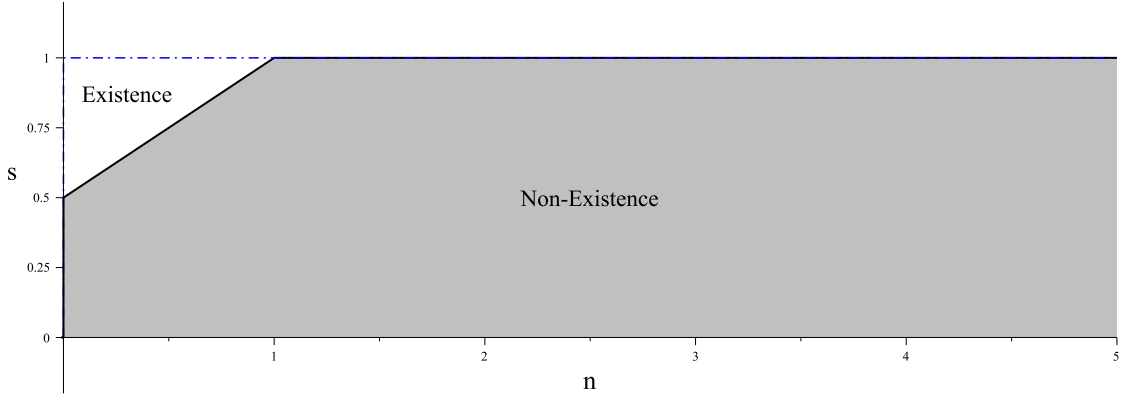}
 \caption{\noindent  \textbf{The non-existence picture in one dimension. }\textit{The white upper triangular region, corresponds to the range of parameters where there is existence:  $n<2s-1$ and $1/2<s\le 1$, together with the special point $n=0$ and $s=1/2$. The grey region corresponds to the range of parameters where there is no existence.}}
  \label{fig.1}
\end{figure}

\noindent\textbf{Remarks. }
\noindent(i) It is important to notice that the function $\overline{u}\equiv 0$ in $\RN\times (\tau, +\infty), \forall \tau>0$ cannot be a solution of the equation in any differential sense since  $\phi(\bar u) \equiv -\infty$ for any point of $\RN\times (\tau, +\infty), \forall \tau>0$. On the other hand, the initial value has collapsed from $u_0(x)+\eps$ to zero at $t=0+$, and not to $u_0(x)$ as expected. This is called instantaneous extinction, and  shows a strong discontinuity of the semigroup at  $t=0$, for data $u_0\in L^1_+(\RN)$.

\noindent(ii) The result depends in a crucial way on the class of initial data. Notice that positive constants are indeed classical solutions to this problem and of course they do not extinguish. Therefore, there should be an intermediate decay between $L^1$ data and constants, such that nontrivial solutions with that decay at infinity will still exist. For standard diffusion $s=1$, the critical decay that implies existence has been investigated by Daskalopoulos and del Pino in the series of papers
\cite{DDP1994, DDP1995, DDP1997} in which they provide an optimal decay condition to determine the existence or the immediate extinction in classes of $L^p$ data.

\noindent(iii) Technically, the proofs of the above theorems require a different treatment, according to whether $N>2s$ or not. The critical situation happens when $N=2s$, hence there appear two different types of behaviour only in the one dimensional case. Indeed, for all dimensions $N\ge 2$ we always ve $N>2s$ and this is covered in Proposition \ref{th:noex1}; and the same proof allows also to  show non-existence the case $N=1$ and $0<s<1/2$.

\medskip

Once we have proved that the approximate solutions converge to zero, the natural question is what happens to solutions in a more traditional sense, like weak, strong or viscosity. To answer this question, we first we have to introduce the concept of solutions that we are going to use.
\begin{defn}
\label{def:weak_sol}
A function $u$ is a \textsl{weak (energy) solution} of equation \eqref{geq.Phi} if it is nonnegative and
\begin{itemize}
\item $u\in C^0(0,+\infty;L^1(\RN))$, ${ \phi(u)}\in L^2_{loc}(0,+\infty;\dot H^{s}(\RN))$
\item it satisfies, for all $\zeta\in C^\infty_{0}(Q)$,
\begin{equation}
\label{eq:weak_form1}
\int_{0}^{\infty}\int_{\RN} u \partial_t\zeta \d x\d t - \int_{0}^{\infty}\int_{\RN}
\Dsm { \phi(u)} \Dsm \zeta \d x\d t = 0.
\end{equation}
\item A weak energy solution is said to be a \textsl{strong solution} if moreover $u\in C((0,+\infty);L^1(\RN))$ and $\partial_t u(t,\cdot)\in L^1(\RN)$ for every $t\ge \tau>0$ and every $\tau>0$.
\end{itemize}
\end{defn}
We next establish a comparison result between approximate and strong solutions, namely we show in Theorem \ref{th:comparison} that $u\le u_\varepsilon$ where $u$ is a strong solution and $u_\varepsilon$ is the above mentioned approximate solution. More details are given in Section \ref{sec.nonweak}, where we prove the following theorem.
\begin{theor}\label{th:nonex-strong}
Let $u$ denote a nonnegative strong solution of \eqref{eq:FrUF}, in the range of parameters $(N,s,n)$ allowed in Theorems $\ref{th:non ex}$ and $\ref{th:non_ex_1D}$. Then, $u\equiv 0$ in $\RN\times (\tau, +\infty)$ for any $\tau>0$. This range of exponents is optimal.
\end{theor}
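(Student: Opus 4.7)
The plan is to reduce Theorem \ref{th:nonex-strong} to Theorems \ref{th:non ex} and \ref{th:non_ex_1D} by means of the comparison statement flagged as Theorem \ref{th:comparison} in the text, namely $u\le u_\eps$ for every strong solution $u$ with datum $u_0\in L^1_+(\RN)$ and every approximate solution $u_\eps$ with datum $u_0+\eps$. Once this is in hand, fixing $(x,t)$ with $t>0$ and sending $\eps\to 0$ gives $u(x,t)\le \bar u(x,t)=0$ by the limit-solution nonexistence; combined with $u\ge 0$ this forces $u\equiv 0$ on $\RN\times(\tau,\infty)$ for every $\tau>0$.

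First I would set up the comparison. Both $u$ and $u_\eps$ satisfy the equation in the strong sense on $(0,\infty)$, so
\begin{equation*}
\partial_t(u-u_\eps) + \Ds\bigl(\phi(u)-\phi(u_\eps)\bigr)=0
\end{equation*}
a.e.\ in $Q$. The natural quantity to control is $\int_{\RN}(u-u_\eps)_+\,\d x$. Testing formally with $\mathrm{sign}_+(u-u_\eps)$ and using the monotonicity of $\phi$, which gives $\mathrm{sign}_+(u-u_\eps)=\mathrm{sign}_+(\phi(u)-\phi(u_\eps))$ pointwise, the key ingredient is Kato's inequality for the fractional Laplacian,
\begin{equation*}
\int_{\RN} \Ds v \cdot \mathrm{sign}_+(v)\,\d x \ \ge\ 0,
\end{equation*}
which holds in an approximate form by replacing $\mathrm{sign}_+$ by a smooth nondecreasing approximation $j_\delta'$ with $j_\delta$ convex (in which case $\int \Ds v\cdot j_\delta'(v) \ge 0$ by the Stroock--Varopoulos-type bilinear estimate). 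Applied here this yields
\begin{equation*}
\frac{d}{dt}\int_{\RN}(u-u_\eps)_+\,\d x \ \le\ 0.
\end{equation*}
Since $u(\cdot,0)=u_0\le u_0+\eps=u_\eps(\cdot,0)$ and both sides lie in $L^1(\RN)$, a short approximation argument (taking $t\searrow 0$ using $u\in C^0((0,\infty);L^1)$ and the monotone convergence $u_\eps(\cdot,t)\to u_0+\eps$ at the initial time for the regular solutions) gives $(u-u_\eps)_+\equiv 0$, i.e.\ $u\le u_\eps$ for a.e.\ $(x,t)$.

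The conclusion is then immediate: for any $t>\tau>0$ and any ball $B\subset\RN$, Theorem \ref{th:non ex} or Theorem \ref{th:non_ex_1D} yields $\int_B u_\eps(\cdot,t)\,\d x\to 0$ as $\eps\to 0$ uniformly in $t\ge\tau$, hence $\int_B u(\cdot,t)\,\d x=0$ and, by nonnegativity, $u\equiv 0$ on $\RN\times(\tau,\infty)$. Optimality of the range $(N,s,n)$ is inherited from Theorems \ref{th:non ex} and \ref{th:non_ex_1D}, because in the complementary regime (in particular for $N=1$, $1/2<s<1$, $0\le n<2s-1$, and for the single point $s=1/2$, $n=0$) nontrivial nonnegative strong solutions are constructed in the companion references \cite{pqrv1,pqrv2,pqrv3,VPQR13,V-exist}, showing that the nonexistence statement cannot be extended further.

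The main obstacle I anticipate is the rigorous justification of the Kato-type step: neither $\phi(u)$ nor $\phi(u_\eps)$ need decay nicely at infinity or be bounded near $t=0$, and $\mathrm{sign}_+$ is not an admissible test function in \eqref{eq:weak_form1}. I would handle this by working with a $C^1$ convex approximation $j_\delta$ of $(\cdot)_+$ with $j_\delta'$ bounded, exploiting the strong-solution regularity $\partial_t u(t,\cdot)\in L^1(\RN)$ and $\phi(u)\in L^2_{loc}(0,\infty;\dot H^s)$ to integrate by parts, and using the bilinear form of $\Ds$ in $\dot H^s$ to validate $\int \Ds(\phi(u)-\phi(u_\eps))\, j_\delta'(u-u_\eps)\,\d x\ge 0$ through monotonicity of $\phi$ and convexity of $j_\delta$, before finally sending $\delta\to 0$.
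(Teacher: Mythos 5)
Your overall route is the same as the paper's: establish the comparison $u\le u_\eps$ between a strong solution and the approximate solutions (this is precisely the paper's Theorem \ref{th:comparison}, proved by testing the difference of the two equations with a smooth approximation of the positive sign function and invoking a Stroock--Varopoulos/Kato-type inequality), then let $\eps\searrow0$ and quote Theorems \ref{th:non ex} and \ref{th:non_ex_1D}, with optimality inherited from the one-dimensional existence results.

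One concrete correction to your plan for the rigorous step. The admissible test function must be a nondecreasing function of the \emph{same} quantity on which $\Ds$ acts, i.e. $p_\delta(\phi(u)-\phi(u_\eps))$ as in the paper; then Stroock--Varopoulos gives $\int_{\RN} p_\delta(w)\,\Ds w\,\d x\ \ge\ \int_{\RN}\bigl|\Dsm \Psi_\delta(w)\bigr|^2\d x\ \ge 0$ with $w=\phi(u)-\phi(u_\eps)$ and $p_\delta'=(\Psi_\delta')^2$, and the identity $\mathrm{sign}_+(u-u_\eps)=\mathrm{sign}_+(\phi(u)-\phi(u_\eps))$ is used only \emph{after} letting $\delta\to0$ in the time term. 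Your closing formula, $\int_{\RN}\Ds(\phi(u)-\phi(u_\eps))\,j_\delta'(u-u_\eps)\,\d x\ge0$ ``by monotonicity of $\phi$ and convexity of $j_\delta$'', is not justified at fixed $\delta$: writing the bilinear form, the integrand is $\bigl(w(x)-w(y)\bigr)\bigl(j_\delta'(v(x))-j_\delta'(v(y))\bigr)$ with $v=u-u_\eps$, and since $w$ is not a monotone function of $v$ (it depends on $u$ and $u_\eps$ separately, not only on their difference), the two factors need not have the same sign; the alignment holds only for the sharp sign function, which is exactly what the $\delta$-regularization was meant to avoid. Replacing $j_\delta'(u-u_\eps)$ by $p_\delta(\phi(u)-\phi(u_\eps))$ makes your argument coincide with the paper's. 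Two minor points: the paper notes that even this step is formal unless one first approximates $\phi(u)-\phi(u_\eps)$ by rapidly decaying (e.g. Schwartz-class) functions before applying the inequality; and for optimality the relevant constructions are the explicit solution \eqref{explicit.sol.mezzo} for $s=1/2$, $n=0$ and the solutions of \cite{V-exist} for $1/2<s<1$, $n<2s-1$, rather than \cite{pqrv1,pqrv2,pqrv3}, which concern nonsingular nonlinearities $m>0$.
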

We shall mention that once we establish our non-existence result for strong solutions, then the same result will hold for any reasonable limit of strong solutions.
Moreover, the Theorem above entails that no Barenblatt solution exists for the singular equation
\eqref{eq:FrUF}. More generally, we can conclude that
there exist no solution (in a reasonable differential sense) to \eqref{eq:FrUF} that regularizes into $L^1(\RN)$ for positive times.

\medskip

\noindent {\sc Outline of the rest of the paper. Further results. }We establish the existence of solutions of the approximation problems and their properties in Section \ref{sec.approx}, and pass to the limit to obtain a so-called limit solution. \\
We then prove the non-existence results, in the form of vanishing limit solution, first for $N>2s$, Section \ref{sec.proofs}, and then for the remaining cases in 1D, Section \ref{sec.1D}. We show optimality of the exponents in 1D in Section \ref{sec.exist.1D}.
Sections \ref{sec.proofs}, \ref{sec.1D} and \ref{sec.exist.1D} contain the proof of Theorems \ref{th:non ex} and \ref{th:non_ex_1D}.\\
Section 7 contains the proof of above mentioned comparison result between strong and approximate solutions, together with the proof of Theorem \ref{th:nonex-strong}.

\medskip

In later sections  we deal with two interesting extensions of the results.

\noindent$\bullet$ \textit{More general nonlinearities. } Using concentration comparison and rearrangement techniques developed in \cite{VazVol1, VazVol2}, we prove that the results of Theorems \ref{th:non ex} and \ref{th:non_ex_1D} can be \textsl{extended to more general nonlinearities}, as stated in the Theorem \ref{Thm.non_ex-Symmetriz}; we refer to Section  \ref{sec.comparison.nonlin} for more details.

\medskip

\noindent$\bullet$  \textit{Elliptic equations. } An important extension of our results concerns the question of existence for solutions to \textsl{elliptic equations} of the form
\begin{equation}
u+\Ds \phi(u)=f\,,\qquad\mbox{with $f\in L^1_+(\RN)$}\,.
\end{equation}
A complete study is performed in Section \ref{sec.elliptic} where we show how the parabolic technique developed in Sections \ref{sec.proofs}, \ref{sec.1D} and \ref{sec.exist.1D} can be adapted to treat this problems and obtain analogous non-existence results, that are stated in  Theorem \ref{th:non_ex_elliptic}. In particular, the optimal ranges of non-existence of the elliptic and parabolic problems coincide.

\medskip

There is another section devoted to the Dirichlet problem on bounded domains. This is rather straightforward, once we make sure that a suitable comparison holds.

\section{Study of the approximate solutions }\label{sec.approx}

It is convenient to write $u_\eps(x,t)=v_\eps(x,t) +\eps$ and then try to solve
 the Cauchy problem
\begin{equation}
\label{eq:FrUF_approx1}
\begin{cases}
\partial_t v + \Ds (\phi_\eps(v)) = 0\,\,\,\,\,\hbox{ with }\phi_\eps(v):= \phi(v+\eps) -\phi(\eps)
\\
v(0) = u_0 \,\,\,\,\hbox{ for } x\in \RN.
\end{cases}
\end{equation}
for all $\eps>0$. This is a modified problem prepared to avoid the degenerate level of the equation by displacement of the axes. Note that for $\eps>0$  and for nonnegative arguments $\phi_\eps$ is a smooth, positive, monotone increasing function with $\phi_\eps(0) = 0$.

Let us construct the approximate solution $u_\eps$ to \eqref{eq:FrUF_approx}. It will be convenient in a first approximation to assume that $u_0$ is bounded.

\medskip

\noindent {\bf Existence and properties.} The theory of existence and uniqueness of weak solutions to \eqref{eq:FrUF_approx1} is given in \cite[Theorem 8.2]{VPQR13}. For any $\eps>0$, let $v_\eps$ denote the weak solution to \eqref{eq:FrUF_approx1}.
 Here weak solution means a function
$v\in C^0([0,+\infty);L^1(\RN))$ such that $\phi_\eps(v)\in L^2_{loc}(0,+\infty;\dot{H}^{s}(\RN))$
satisfying $v(0) = u_0 \hbox{ a.e. in }\RN$ and
\begin{equation}
\label{eq:weak_form}
\int_{0}^{\infty}\int_{\RN} v \partial_t\zeta \d x\d t - \int_{0}^{\infty}\int_{\RN}
\Dsm (\phi_\eps(v)) \Dsm \zeta \d x\d t = 0
\end{equation}
for any $\zeta\in C^{\infty}_{c}(Q)$.  After obtaining these solutions  we restore  for any $\eps>0$ the original $u$-level by defining
$u_\eps:=v_\eps +\eps$, as stated at the beginning.
Clearly, we have that $u_\eps\ge \ve$ (actually, $u_\eps> \ve$) and
hence $v_\eps\ge 0$ in $Q$.

Let us list some further properties of $v_\ve$ and $u_\ve$.

\noindent $\bullet$ {\sl Boundedness and regularity.} These  solutions are shown to be bounded for strictly positive times. More precisely, for every $t>0$ and every $p\in [1,\infty]$ there holds
\begin{equation} \label{eq:Lp}
\|v_\eps(\cdot,t)\|_{L^p(\RN)}\le \|u_0\|_{L^p(\RN)}
\end{equation}
As a consequence,  if $u_0$ belongs to $L^\infty$, then $v_\eps$ is regular enough to satisfy the equation in the classical sense at least when $t>0$\,, by the results of \cite{VPQR13}. Therefore, $u_\eps=v_\varepsilon+\varepsilon$ is smooth and satisfies the original equation in the classical sense in $ Q$. Under these circumstances, the initial data are also taken, at least in the sense of convergence in $L^1(\RN)$.

\noindent $\bullet$ {\sl Conservation of the $L^\infty$ bound. } If $u_0$ belongs to $L^\infty$, then we can remove the boundary layer at $t=0$. More precisely, we can proceed as in \cite{pqrv2,VPQR13} to conclude that any classical
solution of \eqref{eq:FrUF_approx} and of \eqref{eq:FrUF_approx1} satisfies the following
\begin{equation}
\label{eq:li_decay}
\|v_\eps(t)\|_{L^\infty(\RN)}\le \|v_\eps(\tau)\|_{L^\infty(\RN)}
\,\,\,\forall 0\le\tau<t\,.
\end{equation}

\noindent $\bullet$ {\sl Mass conservation. }Nonnegative solutions to the evolution equation \eqref{eq:FrUF_approx1} conserves the mass, cf. \cite{pqrv2,VPQR13}. More precisely, we have for all $t\ge 0$
\begin{equation}
\label{eq:mass_cons}
\int_{\RN} v_\eps(x,t)\d x = \int_{\RN} u_0(x)\d x\qquad \mbox{i.\,e.,}\qquad
\int_{\RN} (u_\eps(x,t)- \eps)\d x = \int_{\RN} u_0(x)\d x
\end{equation}

\noindent $\bullet$ {\sl $L^1$-contraction and comparison. }The evolution \eqref{eq:FrUF_approx1} is an $L^1$ contraction, namely
we have
\begin{equation}\label{eq:contraction}
\int_{\RN}(u_\eps(t) -z_\eps(t))_{+}\d x \le\int_{\RN}(u_0 -z_0)_{+}\d x \,\,\,\hbox{ for }t>0,
\end{equation}
where $z_\eps$ is a solution to \eqref{eq:FrUF_approx} constructed starting from another data $z_0\in L^1(\RN)$.
Here and in what follows $(\cdot)_+$ denotes the positive part function. For the proof we may again refer to  \cite{pqrv2,VPQR13}.

\noindent $\bullet$ {\sl Monotonicity with respect to $\eps$. } An easy version of the above comparison argument shows also that for $0<\ve<\ve'$ we have $0<\ve\le u_\eps\le  u_{\ve'}$.

\noindent $\bullet$ {\sl Time monotonicity.}
The scaling invariance of the equation entails the following important mononoticity
property of the strong solutions, known as the Aronson-B\'enilan inequality,
\begin{equation}\label{eq:AB}
\partial_t u_\eps \le \frac{u_\eps}{(n+1) t}\,\,\,\,\,\forall (x,t)\in Q.
\end{equation}

\noindent {\sl Proof.~} This is rather classical in the PMR theory, \cite{Vapme}, but we give here a proof for convenience of the reader. This inequality is a consequence of the scaling properties of the equation.
We give a brief sketch of the argument that lead to \eqref{eq:AB} (for more
informations see \cite{Vapme}).
Starting from a smooth solution $u$ of \eqref{eq:FrUF} and a parameter $\lambda>1$,
we construct the function $\tilde u_{\lambda} (x,t):= \lambda u(x, \lambda^{-n}t)$.
The scaling invariance of the equation gives that $\tilde u$ is again a
solution of \eqref{eq:FrUF} with initial condition $\tilde u_\lambda(x,0) = \lambda u_0(x)$.
In particular, note that since $\lambda >1$, we have that $\tilde u_\lambda(x,0) \ge u_0(x)$
and thus, thanks to the maximum principle, we have that $\tilde u_\lambda \ge u$ in $Q$.
An important consequence of this fact is that the right derivative
 $\frac{d}{d\lambda}_{|\lambda =1} \tilde u_\lambda \ge 0$
 (this inequality can be proved
 by working directly on the incremental quotient). Thus, we have
 $$ 0\le \frac{\rm d}{\rm d\lambda}_{|\lambda =1} \tilde u_\lambda = u - (n+1)t \partial_t u\,.
 $$\qed

\noindent{\bf Passing to the limit.} We may now pass to the limit $\eps\to0$ using the monotonicity of the family $u_\eps$. When  $u_0$ is bounded the limit is taken in a set of uniformly bounded functions, hence by the monotone convergence theorem the limit $\bar u$ is taken in
the local $L^1$ sense, i.e., in  $L^1(B)$ for every compact subset of $\RN\times[0,\infty]$.
When $u_0$ is not bounded but still integrable, we use the $L^1(\RN)$ contractivity result to obtain the same result. Summing up,

\begin{prop}\label{prop.limit} If $u_0$ is a nonnegative function in $L^1(\RN)$ there exists the monotone limit
$\bar u=\lim\limits_{\eps\to 0} u_\eps$ with local convergence in $L^1(Q)$.
\end{prop}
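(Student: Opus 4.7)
My strategy is to use the monotonicity of $\eps\mapsto u_\eps$ to extract a pointwise everywhere limit, and then upgrade it to convergence in $L^1_{\mathrm{loc}}(Q)$ by means of a uniform local $L^1$ bound together with Lebesgue's dominated convergence theorem.

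The monotonicity property $0<\eps\le u_\eps\le u_{\eps'}$ for $0<\eps<\eps'$ recalled in the preceding section shows that for every $(x,t)\in Q$ the map $\eps\mapsto u_\eps(x,t)$ is nonincreasing and bounded below by $0$; hence the pointwise limit
\[
\bar u(x,t):=\inf_{\eps>0}u_\eps(x,t)=\lim_{\eps\to 0}u_\eps(x,t)\ge 0
\]
exists at every point of $Q$. This already gives pointwise monotone convergence.

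To upgrade to $L^1_{\mathrm{loc}}(Q)$ convergence, fix any $\eps_0>0$. The $L^p$ estimate \eqref{eq:Lp} with $p=1$ applied to $v_{\eps_0}=u_{\eps_0}-\eps_0$ gives $\|v_{\eps_0}(\cdot,t)\|_{L^1(\RN)}\le\|u_0\|_{L^1(\RN)}$ for every $t\ge 0$, so for any bounded Borel set $B\subset\RN$,
\[
\int_B u_{\eps_0}(x,t)\,\d x\;\le\;\|u_0\|_{L^1(\RN)}+\eps_0|B|<\infty,
\]
uniformly in $t$. By the same monotonicity, $0\le u_\eps\le u_{\eps_0}$ on $Q$ for every $\eps\in(0,\eps_0]$, so $u_{\eps_0}\in L^1_{\mathrm{loc}}(Q)$ dominates the whole decreasing family, and in particular $\bar u\in L^1_{\mathrm{loc}}(Q)$. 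For every compact set $K\subset Q$, the nonnegative sequence $u_\eps-\bar u$ decreases pointwise to $0$ and is dominated by $u_{\eps_0}\in L^1(K)$, so dominated convergence yields $u_\eps\to\bar u$ in $L^1(K)$, which is the claimed local $L^1(Q)$ convergence.

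I do not foresee a genuine obstacle here: the monotonicity in $\eps$ bypasses the absence of a uniform $L^\infty$ bound when $u_0$ is unbounded, and the $L^1$ estimate on $v_\eps$, transferred to $u_\eps=v_\eps+\eps$ by restriction to bounded sets, furnishes the required uniform local integrability. An alternative route to the same bound, closer to the hint after the statement, would be to apply the $L^1$-contraction \eqref{eq:contraction} with $z_0\equiv 0$ (whose approximating solutions are the constants $z_\eps\equiv\eps$), yielding $\int_{\RN}(u_\eps(t)-\eps)\,\d x\le\|u_0\|_{L^1(\RN)}$ for all $t>0$.
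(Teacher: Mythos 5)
Your argument is correct, and it is organized a bit differently from the paper's. The paper splits into two cases: for bounded $u_0$ it invokes the monotone convergence theorem inside the class of uniformly bounded functions, and for general $u_0\in L^1_+(\RN)$ it falls back on the $L^1$-contraction property through a density argument (approximating $u_0$ by bounded data), exactly the device you only mention as an ``alternative route''. Your proof instead treats both cases at once: you fix $\eps_0$, dominate the whole decreasing family $\{u_\eps\}_{\eps\le\eps_0}$ by $u_{\eps_0}$, observe that $u_{\eps_0}$ is locally integrable on $Q$ uniformly in time because $\|v_{\eps_0}(\cdot,t)\|_{L^1(\RN)}\le\|u_0\|_{L^1(\RN)}$ and the extra constant $\eps_0$ only costs $\eps_0|B|$ on a bounded set, and then apply dominated convergence on each compact $K\subset Q$. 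This is a clean simplification: it avoids the bounded/unbounded dichotomy and the density step entirely, at the price of nothing beyond the monotonicity in $\eps$ and the $p=1$ estimate (equivalently mass conservation), both already available. Two small points of hygiene: for merely integrable $u_0$ the solutions are only defined a.e.\ (as curves in $L^1$), so the monotone pointwise limit should be asserted almost everywhere rather than ``at every point of $Q$''; and the limit along the continuous parameter $\eps$ should be read through arbitrary sequences $\eps_k\searrow 0$, which monotonicity makes harmless. Neither affects the validity of the proof.
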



\section{Proofs of non-existence. The main case}\label{sec.proofs}

We will establish here the trivial limit of the approximation scheme in the range $N>2s>0$,
of course for nonnegative data $u_0\in L^1(\RN)$ and exponents $n\ge 0$. This subsection is the core of our technical argument. An important feature in our proof is that the inverse operator of $\Ds$ can be expressed as a convolution with the Riesz potential
(see e.g.\cite{stein}), namely
\begin{equation}
\label{eq:riesz}
(-\Delta)^{-s} u(x) = c_{N,s}\int_{\RN}\frac{u(x-y)}{\vert y\vert^{N-2s}}\d y\,,
\end{equation}
where $c_{N,s}$ is a positive normalizing constant. The above representation does not hold in the subcritical case $N-2s\le 0$. This happens in dimension one and for $s\in [1/2,1)$, and requires a different argument, cf. Section \ref{sec.1D}.

We want to show that the sequence $u_\eps$ of solutions to \eqref{eq:FrUF_approx}
converges to zero as $\eps\searrow 0$ for strictly positive times. The following result sums up the situation.
\begin{prop}\label{th:noex1}
For any given $u_0\ge 0$ with  $u_0\in L^1(\RN)$, let
$u_\eps$ be the solution of \eqref{eq:FrUF_approx}.
Then for all $n\ge 0$ and  $N> 2s$ we have
\begin{equation}
\label{eq:no_ex}
\lim_{\eps\searrow 0} u_\eps = 0
\end{equation}
and the convergence is uniform in $\RN\times (\tau,\infty)$ if $u_0$ is bounded (the time $\tau>0$ is arbitrary), while it happens locally in $L^1(Q)$ for general $u_0\in L^1_+(\RN)$. More  precisely, for every ball $B\subset \RN$
\begin{equation}
\lim_{\eps\searrow 0}u_{\eps} = 0 \hbox{ in } L^1(B) \,\,\,\hbox{ uniformly in }t\ge \tau \hbox{ for any }\tau>0\,.
\end{equation}
\end{prop}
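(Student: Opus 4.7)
The plan is to exploit that for $N>2s$ the operator $\Ds$ has an integral inverse given by the Riesz potential \eqref{eq:riesz}, so that equation \eqref{eq:FrUF_approx1} can be integrated in time \emph{pointwise} in $x$. Writing $v_\eps=u_\eps-\eps$, which solves $\partial_t v_\eps+\Ds\phi_\eps(v_\eps)=0$ with $v_\eps(\cdot,0)=u_0$ (crucially independent of $\eps$), I would apply $\Dsi$ to both sides to obtain
\[
\partial_t U_\eps(x,t)=-\phi_\eps(v_\eps(x,t))=\phi(\eps)-\phi(u_\eps(x,t)),\qquad U_\eps:=\Dsi v_\eps,
\]
and then integrate in time to arrive at the pointwise identity
\[
U_\eps(x,t)=U_\eps(x,0)+\phi(\eps)\,t-\int_0^t\phi(u_\eps(x,\tau))\,\d\tau,
\]
in which $U_\eps(x,0)=\Dsi u_0(x)$ does not depend on $\eps$. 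For $u_0\in L^1_+\cap L^\infty$, a standard splitting of the Riesz kernel into near and far parts gives $\|U_\eps(\cdot,0)\|_{L^\infty(\RN)}<\infty$; for general $u_0\in L^1_+(\RN)$, $U_\eps(x,0)$ is only finite a.e.\ and locally integrable.

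\textbf{Use of Aronson--B\'enilan.} Next, I would use the inequality \eqref{eq:AB}, which says $t\mapsto t^{-1/(n+1)}u_\eps(x,t)$ is non-increasing, to bound the time integral of $-\phi(u_\eps)$ by its value at the endpoint. Indeed, for $0<\tau\le t$ it yields $u_\eps(x,\tau)\ge u_\eps(x,t)\,(\tau/t)^{1/(n+1)}$; for $\phi(u)=-u^{-n}$ this gives
\[
\int_0^t u_\eps(x,\tau)^{-n}\,\d\tau\le u_\eps(x,t)^{-n}\int_0^t(t/\tau)^{n/(n+1)}\,\d\tau=(n+1)\,t\,u_\eps(x,t)^{-n},
\]
where the $\tau$-integral converges since $n/(n+1)<1$. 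An analogous manipulation handles the logarithmic case $\phi=\log u$, producing $\int_0^t(-\log u_\eps(x,\tau))\,\d\tau\le t-t\log u_\eps(x,t)$.

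\textbf{Conclusion.} Combining these estimates with the sign information $U_\eps(x,t)\ge 0$ (the Riesz potential of a nonnegative function) would produce the pointwise lower bound
\[
u_\eps(x,t)^{-n}\ge\frac{1}{n+1}\Bigl(\eps^{-n}-\frac{U_\eps(x,0)}{t}\Bigr)\qquad\text{in the power case,}
\]
and $\log u_\eps(x,t)\le\log\eps+1+U_\eps(x,0)/t$ in the logarithmic case. Since $\phi(\eps)\to-\infty$ as $\eps\to 0^+$, in either case the right-hand side forces $u_\eps(x,t)\to 0$. When $u_0\in L^\infty$, the uniform bound on $U_\eps(\cdot,0)$ delivers \emph{uniform} convergence in $x$ and in $t\ge\tau>0$. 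For the general case $u_0\in L^1_+(\RN)$, I would approximate by truncations $u_0^k:=\min(u_0,k)$: the comparison principle gives $0\le u_\eps^k\le u_\eps$ while mass conservation yields $\|u_\eps-u_\eps^k\|_{L^1(\RN)}=\|u_0-u_0^k\|_{L^1(\RN)}$ uniformly in $\eps$, so local $L^1$ convergence transfers from the bounded case by a diagonal argument over $\eps\to 0$ followed by $k\to\infty$.

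\textbf{Main obstacle.} The delicate point is the rigorous use of $\Dsi$: one must justify moving this operator under $\partial_t$ and under the time integral, and pairing it with $\Ds$, when $v_\eps(\cdot,t)$ belongs only to $L^1\cap L^\infty$. I would handle this by first restricting to $u_0\in L^1_+\cap L^\infty$, where Section \ref{sec.approx} provides classical smoothness of $u_\eps$ and hence legitimates all manipulations with the Riesz kernel via Fubini, and then transferring the conclusion to general $u_0\in L^1_+(\RN)$ through the monotone limit described above.
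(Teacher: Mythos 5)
Your proposal is correct and follows essentially the same route as the paper: inversion of $\Ds$ via the Riesz potential (valid precisely because $N>2s$), a uniform bound on the potential coming from the $L^1\cap L^\infty$ bounds on the solution, the Aronson--B\'enilan estimate \eqref{eq:AB} to convert divergence of the time integral of $\phi(u_\eps)$ into vanishing of $u_\eps$ at the endpoint time, and an $L^1$-contraction/density argument for general $L^1_+$ data. The only cosmetic differences are that you integrate from $t=0$ and use nonnegativity of $\Dsi v_\eps$ (which lets you skip the $L^\infty$-decay step \eqref{eq:li_decay}), whereas the paper derives the two-time identity \eqref{eq:no_ex2} on $[\tau^*,\tau]$; the technical point you flag about pairing $\Dsi$ with the equation is exactly what the paper justifies in Lemma \ref{Lem.Green} via approximate test functions and Fubini.
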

Notice that this result gives the proof of Theorem \ref{th:non ex}, i.e. applies to dimensions $N\ge 2$ for all $0<s<1$ and all $n\ge 0$. However, in $N=1$ we only cover the case $0<s<1/2$, therefore we prove Theorem \ref{th:non_ex_1D} only when $2s<1$. The whole picture in 1D depends on other arguments and will be treated in the next section.

\subsection{Proof of Proposition \ref{th:noex1}}The proof is divided in several steps. We will begin by considering the case of initial data that are smooth and with compact support (or decay fast at infinity). Finally, in a the last step, using an approximation/density argument we will cover
the general case of $L^1_+(\RN)$ initial conditions.

\noindent$\bullet~$\textsc{Step 1. }\textit{Proof of convergence for good data. Approximate solutions. }Consider the sequence $u_\eps$ of solutions to \eqref{eq:FrUF_approx}. Recall that
\begin{align}
\eps\le u_\eps \hbox{  in } \RN\times(0,+\infty),\qquad \label{eq:basso}\\
\int_{\RN}(u_\eps(x,t)-\eps)\d x = \int_{\RN}u_{0}(x)\d x,\,\,\,\,\forall t>0,
\label{eq:L1eps}\\
\hbox{ the sequence } u_\eps \hbox{ is monotone w.r.t. } \eps \label{eq:monotone}
\end{align}
From these properties we already know that have that the limit
function $ \bar u(x,t) := \lim_{\eps\searrow 0} u_\eps (x,t)$ is well defined and finite for any $(x,t) \in Q$.
Moreover, we have that the function $\bar u$
is such that $\bar u(\cdot,t)\in L^1(\RN)$ for
any $t>0$. More precisely, we have
\begin{equation}
\| \bar u(t)\|_{L^1(\RN)}\le \| u_0\|_{L^1(\RN)}\,\,\forall t>0.
\end{equation}
The main question is whether or not the mass conservation law is preserved in the limit,
since both possibilities are compatible with Fatou's theorem. We are going to prove that
actually, mass is not conserved and the limit function is just zero.

Let us take an arbitrary $\tau>0$.
We are going to prove that
\begin{equation}
\label{eq:conv_unif}
u_\eps \xrightarrow{\eps\searrow 0} 0\,\,\, \hbox{ uniformly on }
 \RN\times (\tau,+\infty).
\end{equation}
The uniform convergence is a consequence of
the fact that the initial condition $u_0$ is assumed to
be smooth and with compact support. We first need to prove the following lemma.

\begin{lemma}\label{Lem.Green} Under the running assumptions, the following equality holds true for any $\varepsilon>0$,  $0<\tau^*\le \tau$ and for any $x\in  \R^N$
\begin{equation}\label{eq:no_ex2}
\int_{\tau^*}^{\tau} \phi(u_\eps(\cdot,t))\d t - (\tau-\tau^*)\phi(\eps) =
 \int_{\RN}\frac{u_\eps(y,\tau)-u_\eps(y,\tau^*)}{\vert x-y\vert^{N-2s}} \d y\,.
\end{equation}
\end{lemma}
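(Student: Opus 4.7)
The plan is to invert the fractional Laplacian using the Riesz potential, which is available precisely because $N>2s$, and thereby read off an antiderivative in time of $\phi(u_\eps)$ from the Riesz transform of $u_\eps$. Since $\Ds$ annihilates constants, the equation for $u_\eps$ can be written equivalently as
\begin{equation*}
\partial_t v_\eps + \Ds \phi_\eps(v_\eps)=0, \qquad \phi_\eps(v_\eps)=\phi(u_\eps)-\phi(\eps),
\end{equation*}
and, by the construction of the previous section (we are in Step 1 of the proof, working with smooth, compactly supported $u_0$), $v_\eps$ is a smooth classical solution and $v_\eps$, $\partial_t v_\eps$, $\phi_\eps(v_\eps)$ all lie in $L^1(\RN)\cap L^\infty(\RN)$ with sufficient spatial decay; this will be the functional setting in which every manipulation is carried out.

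The two-step computation would go as follows. First, integrate the approximate equation in time from $\tau^*$ to $\tau$ and commute $\Ds$ with the time integral (justified by Fubini, given the decay and regularity of $v_\eps$ and of $\phi_\eps(v_\eps)$):
\begin{equation*}
\Ds\!\left(\int_{\tau^*}^{\tau}\phi_\eps(v_\eps(\cdot,t))\,\d t\right)\!(x)
= v_\eps(x,\tau^*)-v_\eps(x,\tau) = u_\eps(x,\tau^*)-u_\eps(x,\tau).
\end{equation*}
Second, apply $\Dsi$ to both sides and use the Riesz potential representation \eqref{eq:riesz}; since $N>2s$ and the right-hand side is an $L^1\cap L^\infty$ function of $x$ (being a difference of two such), the integral converges and the identity $\Dsi\Ds f=f$ holds on the function in parentheses. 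Substituting $\phi_\eps(v_\eps)=\phi(u_\eps)-\phi(\eps)$ then yields
\begin{equation*}
\int_{\tau^*}^{\tau}\phi(u_\eps(x,t))\,\d t-(\tau-\tau^*)\phi(\eps)
= \int_{\RN}\frac{u_\eps(y,\tau^*)-u_\eps(y,\tau)}{|x-y|^{N-2s}}\,\d y,
\end{equation*}
up to the normalizing constant $c_{N,s}$ (and possibly a sign arranged by the orientation of the time interval), which is the content of \eqref{eq:no_ex2}.

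The main technical point, and the only nontrivial obstacle, is justifying that $\Dsi\Ds=\Id$ on the functions actually appearing and, in particular, that the Riesz integral on the right is convergent at infinity. This hinges on the uniform-in-$\eps$ bounds from Section \ref{sec.approx} (the $L^1$ bound \eqref{eq:L1eps}, the $L^\infty$ bound \eqref{eq:li_decay}, and the Aronson--B\'enilan estimate \eqref{eq:AB} which controls $\partial_t v_\eps$), together with the fact that compactly supported smooth data propagate into solutions that decay fast enough at infinity for the kernel $|x-y|^{-(N-2s)}$ to be integrable against their time increments. Once these integrability statements are in hand, the commutation of $\partial_t$ and $\Dsi$ (or of $\Ds$ with the time integral) is a routine Fubini argument. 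For general $u_0\in L^1_+(\RN)$ the identity is extended by the density/approximation step announced at the beginning of the subsection, using the $L^1$-contraction \eqref{eq:contraction} and monotone convergence in $\eps$.
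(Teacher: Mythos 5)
Your proposal is correct and is essentially the paper's own argument: both proofs invert $\Ds$ through the Riesz potential and control the resulting integral by the same near/far splitting using the $L^\infty$ and $L^1$ bounds on $v_\eps$, the paper merely implementing the inversion rigorously by testing the classical equation against $\psi_\delta=\chi_{[\tau^*,\tau]}(t)\,\Dsi\big(\chi_{B_{\delta}(x)}/|B_{\delta}(x)|\big)$ and letting $\delta\searrow 0$, which is exactly the careful version of your ``apply $\Dsi$ and use $\Dsi\Ds=\Id$'' step. The sign you obtain (namely $u_\eps(\cdot,\tau^*)-u_\eps(\cdot,\tau)$ in the Riesz integral) is the one the direct computation gives and differs from the displayed statement, but this is immaterial since only the uniform bound on that term is used in the sequel.
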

\begin{proof}Recall that $u_\varepsilon=v_\varepsilon +\varepsilon$, and that $v_\varepsilon$ satisfies equation \eqref{eq:FrUF_approx1}. We will work with $v_\varepsilon$, and we rewrite \eqref{eq:no_ex2} in terms of $v_\varepsilon$ in the form:
\begin{equation}\label{eq:no_ex3}
\int_{\tau^*}^{\tau} \phi_\varepsilon(v_\eps(\cdot,t))\d t =
 \int_{\RN}\frac{v_\eps(y,\tau)-v_\eps(y,\tau^*)}{\vert x-y\vert^{N-2s}} \d y\,.
\end{equation}
The proof of this important identity can be obtained in several ways. One way is to consider in the weak formulation of equation \eqref{eq:FrUF_approx1} the following test function
\[
\psi_\delta(t,x)=\chi_{[\tau^*,\tau]}(t)\,\Dsi\frac{\chi_{B_{\delta}(0)}(x)}{|B_{\delta}(0)|}\,.
\]
Notice that for almost every $x\in \R^N$,
\[
\Dsi\frac{\chi_{B_{\delta}(x)}(y)}{|B_{\delta}(x)|}\xrightarrow{\delta\searrow 0} \frac{C}{|x-y|^{N-2s}}\qquad\mbox{for almost every $y\in \R^N$}
\]
Unfortunately, this function is not an admissible test function. One possibility is to approximate $\psi_\delta$ by means of smooth functions, as it has been done for example in  Proposition 4.2 of \cite{BV2015}. Another possibility is to observe that $v_\varepsilon$ are smooth functions by construction, cf. \cite{VPQR13}, in particular they are classical solutions to equation \eqref{eq:FrUF_approx1} therefore we can multiply the equation by $\psi_\delta$ obtaining
\begin{equation}
\int_\R  \int_{\R^N} \psi_\delta\, \partial_t v_\eps(y,t)\,\d t \d y = - \int_\R  \int_{\R^N}  \psi_\delta \Ds \phi_\varepsilon(v_\varepsilon)\d t\ \d y
\end{equation}
Now we analyze the two terms separately. As for the left-hand side:
\begin{equation}\label{Lem.1}\begin{split}
&\int_\R  \int_{\R^N} \psi_\delta\, \partial_t v_\eps(y,t)\,\d t \d y=\int_\R \chi_{[\tau^*,\tau]}(t) \int_{\R^N} \partial_t v_\eps(\cdot,t) \Dsi\frac{\chi_{B_{\delta}(0)}(x)}{|B_{\delta}(0)|}\dt\d x\\
&=\int_{\RN}[v_\eps(y,\tau)-v_\eps(y,\tau^*)]\Dsi\frac{\chi_{B_{\delta}(x)}(y)}{|B_{\delta}(x)|}\d y
\xrightarrow{\delta\searrow 0}\int_{\RN}\frac{v_\eps(y,\tau)-v_\eps(y,\tau^*)}{\vert x-y\vert^{N-2s}} \d y\,.
\end{split}\end{equation}
the last limit can be justified by splitting the integral as follows.
First of all, we notice that
\begin{equation}
\label{eq:bound_sopra}
\Big\vert\Dsi\frac{\chi_{B_{\delta}(x)}(y)}{|B_{\delta}(x)|}\Big\vert\le C
\frac{1}{|x-y|^{N-2s}} \,\,\hbox{ in } \R^N,
\end{equation}
for a constant $C>0$ depending on $N$ and on $s$.
Then, we have that
\begin{equation}
\label{eq:bound_rhs}
\begin{split}
&\int_{\RN}\frac{v_\eps(y,\tau)-v_\eps(y,\tau^*)}{\vert x-y\vert^{N-2s}}\d y
=\int_{\RN} \frac{v_\eps(x-y,\tau)-v_\eps(x-y,\tau^*)}{\vert y\vert^{N-2s}}\d y\\
& =  \int_{B_{1}(0)}\frac{v_\eps(x-y,\tau)-v_\eps(x-y,\tau^*)}{\vert y\vert^{N-2s}}\d y
+ \int_{B^c_{1}(0)}\frac{v_\eps(x-y,\tau)-v_\eps(x-y,\tau^*)}{\vert y\vert^{N-2s}}\d y\\
&\le \|v_\eps(\cdot,\tau)-v_\eps(\cdot,\tau^*)\|_{L^\infty(\R^N)}\int_{B_{1}(0)}\frac{1}{\vert y\vert^{N-2s}}\d y
+ \int_{B^c_{1}(0)}|v_\eps(x-y,\tau)-v_\eps(x-y,\tau^*)|\d y\\
&\le 2(C\|v_\varepsilon(0)\|_{L^\infty(\R^N)}+\|v_\varepsilon(0)\|_{L^1(\R^N)})\le 2C\max\{\|u_0\|_{L^1(\R^N)}\,,\,\|u_0\|_{L^\infty(\R^N)}\}
\end{split}
\end{equation}
In the third line,  we have used that  $\vert y\vert^{2s-N}$ is integrable in $B_1(0)$ and $v_\eps(\cdot,\tau)-v_\eps(\cdot,\tau^*)$ is
bounded, while for $\vert y\vert>1$ the kernel $\vert y\vert^{2s-N}>1$ and $v_\eps(\cdot,\tau)-v_\eps(\cdot,\tau^*)$ is in $L^1(\RN)$. We conclude then by dominated convergence.

The right-hand side becomes:
\begin{equation}\begin{split}
&- \int_\R  \int_{\R^N}  \psi_\delta \Ds \phi_\varepsilon(v_\varepsilon)\d t\ \d y
= - \int_\R  \chi_{[\tau^*,\tau]}(t) \int_{\R^N}  \Dsi\frac{\chi_{B_{\delta}(x)}}{|B_{\delta}(x)|}
    \Ds \phi_\varepsilon(v_\varepsilon)\d t\ \d y\\
&= - \frac{1}{|B_{\delta}(x)|} \int_{B_{\delta}(x)} \int_{\tau^*}^\tau  \phi_\varepsilon(v_\varepsilon(y,t))\d t\ \d y
\xrightarrow{\delta\searrow 0}\int_{\tau^*}^\tau  \phi_\varepsilon(v_\varepsilon(x,t))\d t
\end{split}\end{equation}
where in the second line we have used Fubini's Theorem as follows:
\begin{equation}\label{Lem.2}\begin{split}
&\int_{\R^N}  \Dsi\frac{\chi_{B_{\delta}(x)}}{|B_{\delta}(x)|}
    \Ds \phi_\varepsilon(v_\varepsilon)  \d y
= \int_{\R^N} \left(\int_{\R^N} \frac{\chi_{B_{\delta}(x)}(z)}{|B_{\delta}(x)|}\frac{\d z}{|z-y|^{N-2s}}\right)
    \Ds \phi_\varepsilon(v_\varepsilon)  \d y  \\
&\int_{\R^N} \frac{\chi_{B_{\delta}(x)}(z)}{|B_{\delta}(x)|}\left( \int_{\R^N} \frac{\Ds \phi_\varepsilon(v_\varepsilon(y,t))  }{|z-y|^{N-2s}}\d y\right)\d z
=\int_{\R^N} \frac{\chi_{B_{\delta}(x)}}{|B_{\delta}(x)|} \Dsi \Ds \phi_\varepsilon(v_\varepsilon)  \d y
\end{split}\end{equation}
In the last step we have used the definition of Lebesgue point for $x \mapsto \int_{\tau^*}^\tau  \phi_\varepsilon(v_\varepsilon(x,t))\d t$.

Joining \eqref{Lem.1} and \eqref{Lem.2} we finally obtain \eqref{eq:no_ex3} and hence \eqref{eq:no_ex2} follows.
\end{proof}

\medskip

\noindent$\bullet~$\textsc{Step 2. }\textit{Convergence as $\varepsilon \searrow 0$ for bounded initial data. }Now may now proceed with the proof that  $u_\eps \xrightarrow{\eps\searrow 0} 0$\,, uniformly on $\RN\times (\tau,+\infty)$. To this end, let us fix $\tau$ and $\tau^*$ such that $0<\tau*<\tau$. The bound obtained in \eqref{eq:bound_rhs} combined with \eqref{eq:no_ex2}, gives
\begin{equation*}
\Big\vert \int_{\tau^*}^{\tau}\phi(u_\eps(x,t))\d t\Big\vert = - \int_{\tau^*}^{\tau}\phi(u_\eps(x,t))\d t\ge  -(\tau-\tau^*)\phi(\eps) - C,\,\,\,\,\forall x\in \RN,
\end{equation*}
where the constant $C$ depends only on $s$ and on $N$ and
is independent of $\eps$.
As a consequence, if we let $\eps\searrow 0$ and recall that
$\phi(\eps)\xrightarrow{\eps\searrow 0} -\infty$  (see \eqref{eq:nonlinearity}),
we have that
\begin{equation}
\label{eq:no_ex3}
\lim_{\eps\searrow 0}\int_{\tau^*}^{\tau}\phi(u_\eps(x,t))\d t = -\infty, \,\,\,\hbox{ uniformly for } x\in \RN.
\end{equation}
Next, we use time monotonicity. Since,
\begin{equation*}
\partial_t (t^{-1/(n+1)}v(t)) = t^{-1/(n+1)}\big(\partial_t v(t) - \frac{1}{(n+1)t}v(t) \big)\,\,\forall t>0,
\end{equation*}
the validity of the Aroson-B\'enilan estimate \eqref{eq:AB} is equivalent
to the decreasing monotonicity of $t^{-1/(n+1)}v(t)$ with respect to time.
Thus, we have that $u_\eps$ verifies
\begin{equation}
\label{eq:AB_decu}
u_\eps(x,t)\le \Big(\frac{t}{t_0}\Big)^{1/(n+1)}u_\eps(x,s), \hbox{ for any } x\in \RN \hbox{ and for any } t>t_0>0.
\end{equation}
On the other hand, using the definition of $\phi$ in \eqref{eq:nonlinearity} for $n>0$, monotonicity implies
\begin{equation}
\label{eq:AB_decw}
\phi(u_\eps(x,t)) \le \Big(\frac{t}{t_0} \Big)^{-n/(n+1)}  \phi(u_\eps(x,t_0)),\hbox{ for any } x\in \RN \hbox{ and for any } t>t_0>0.
\end{equation}
Therefore, for any $x\in \RN$ and $n>0$ we have
$$
\int_{\tau^*}^{\tau} \phi(u_\eps(x,t)) \d t \ge C(\tau, \tau^*,n) u_\eps(x,\tau)^{-n},
$$
where the constant $C$ can be computed explicitly.
As for for $n=0$, monotonicity implies
\begin{equation}
\label{eq:AB_decw_log}
\phi(u_\eps(x,t)) \le   \phi(u_\eps(x,s)) + \log\Big(\frac{t}{s}\Big),
\hbox{ for any } x\in \RN \hbox{ and for any } t>s>0,
\end{equation}
Therefore, for any $x\in \RN$ and $n=0$ we have
$$
\int_{\tau^*}^{\tau} \phi(u_\eps(x,t)) \d t \ge \log(u_\eps(x,\tau)) - C(\tau,\tau^*)
$$
where again the constant $C$ can be explicitly computed.

\medskip

\noindent$\bullet~$\textsc{Step 3. }\textit{Uniform convergence for bounded data. }All the above estimates imply that $\phi(u_\eps(\cdot,\tau))\xrightarrow{ \eps\searrow 0} -\infty$
uniformly on $\RN$, which implies that
$u_\eps(\cdot,\tau)\xrightarrow{\eps\searrow 0} 0$
uniformly in $\RN$. Thus, using the $L^\infty$-decay \eqref{eq:li_decay}, we can conclude \eqref{eq:conv_unif}.

\medskip

\noindent$\bullet~$\textsc{Step 4. }\textit{The case of unbounded initial conditions. }When dealing with initial conditions which are only integrable, we will obtain a weaker convergence than \eqref{eq:no_ex} (as in the statement of Theorem \ref{th:noex1}), by a standard density argument that we will discuss here. Given $u_0$ we prepare a sequence of bounded functions $0\le u_{0,M}\le M$, and with compact support, such that
$$
u_{0,M}\xrightarrow{M\nearrow +\infty} u_0 \,\,\,\,\hbox{ in } L^1(\RN).
$$
In particular, for any $k>0$, we find $\bar M$ such that
\begin{equation}
\label{eq:unb1}
\|u_{0,M}-u_0\|_{L^1(\RN)}\le \frac{1}{k}, \,\,\,\forall M>\bar M.
\end{equation}
Let $u_{\eps,M}$ be the solution to \eqref{eq:FrUF_approx}
starting from $u_{0,M}+\eps$. and let $M>\bar M$.
Since $u_{0,M}\in L^\infty(\RN)$, we know from the
analysis above that in the appropriate range of  $N,n,s$, we have
\begin{equation*}
\lim_{\eps\searrow 0} u_{\eps,M} = 0,
\end{equation*}
uniformly in $\RN\times (\tau,+\infty)$, $\forall \tau>0$.
Fix now $\tau>0$ and $k>0$, and let $M$ as in \eqref{eq:unb1}.
 Then, there exists $\eps_0$ such that for $\eps<\eps_0$, there holds
$$
u_{\eps,M}(x,t)\le \frac{1}{k} \,\,\,\hbox{ in } \RN\times (\tau,+\infty).
$$
On the other hand, using \eqref{eq:unb1} and \eqref{eq:contraction},
we have for $M>\bar M$
$$
\| u_{\eps,M}(\cdot,t) - u_{\eps}(\cdot,t)\|_{L^1(\RN)}\le \frac{1}{k}.
$$
Thus, for every ball $B$ in $\RN$ we conclude that
$$
\lim_{\eps\searrow 0}u_{\eps} = 0 \hbox{ in } L^1(B) \,\,\,\hbox{ uniformly in }t\ge \tau \hbox{ for any }\tau>0\,.
$$
This concludes the proof of Proposition \ref{th:noex1}, hence of Theorem \ref{th:non ex} and of Theorem \ref{th:non_ex_1D} only when $N>2s$.

\section{Trivial limit solution in dimension 1}\label{sec.1D}

We now deal with the one-dimensional case, $N=1$. This section contains the proof of Theorem \ref{th:non_ex_1D} in the cases not included in the previous section ($2s<1$), namely we consider here the remaining two cases: $s\in (1/2,1)$ with $n\ge 2s-1$ and $s=1/2$ with $n>0$.

Note also that these conditions are in complete agreement with the existing results available for the standard Laplacian, i.e. $s=1$, where existence of nontrivial solutions holds when $N=1$ and $n<1$. Here we prove non-existence in the remaining cases.

These two cases require a quite different technical treatments, mainly because the Green function has really a different form from the other cases, indeed it grows at infinity.

The proof of the optimality of the range of exponents for this one dimensional case will be given in Section \ref{sec.exist.1D}.\normalcolor

\subsection{Non existence for $s\in (1/2,1)$}

We now analyze the trivial limit in the situation $N=1$, $s\in (1/2,1)$ (so that the rule $N>2s$ does not apply) and $n\ge 2s-1$. Thanks to the monotonicity \eqref{eq:monotone}, the limit
\[
\bar u := \lim_{\eps\searrow 0} u_\eps
\]
is well defined and finite for any $(x,t) \in Q = \R\times (0,+\infty)$
and is such that $\bar u(\cdot,t)\in L^1(\R)$ for any $t>0$. We need a preliminary lemma.

\begin{lemma}\label{Lem.Green.1}
Under the running assumptions, the following equality holds true for any $\varepsilon>0$,  $0<\tau\le \bar\tau$ and for any $x\in  \R$
\begin{equation}
\label{eq:no_ex2_1D.000}
\int_{\tau}^{\bar t} \phi(u_{\eps}(x,t))\d t = \int_{\tau}^{\bar t} \phi(u_{\eps}(0,t))\d t
+\int_{\R}\rho_\eps(-y)\big(\vert x + y\vert^{2s -1} - \vert y\vert^{2s -1} \big) \d y.
\end{equation}
where \ $  \rho_\eps(x) := u_{\eps}(x,\bar t) - u_\eps(x,\tau)$,  for $ x\in \R. $
\end{lemma}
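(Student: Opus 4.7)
The plan is to mimic Lemma \ref{Lem.Green}: integrate the approximate equation in time and invert $\Ds$, but with the twist that in dimension one with $s>1/2$ the Riesz kernel $|x|^{2s-1}$ grows at infinity and must be renormalized. First I observe that the claim is really a statement about $v_\eps$: since $\phi(u_\eps(x,t))=\phi_\eps(v_\eps(x,t))+\phi(\eps)$, the constant $\phi(\eps)$ cancels when forming the difference between the value at $x$ and the value at $0$, and $\rho_\eps(x)=v_\eps(x,\bar t)-v_\eps(x,\tau)$. Set
\begin{equation*}
\Phi_\eps(x):=\int_\tau^{\bar t}\phi_\eps(v_\eps(x,t))\,\dt.
\end{equation*}
Integrating $\partial_t v_\eps+\Ds\phi_\eps(v_\eps)=0$ on $(\tau,\bar t)$ gives the pointwise identity $\Ds\Phi_\eps=-\rho_\eps$ on $\R$. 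Mass conservation \eqref{eq:mass_cons} yields the crucial zero-mean property $\int_\R \rho_\eps\,\d y=0$.

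\textbf{Renormalized Riesz kernel.} In the regime $N-2s<0$, formula \eqref{eq:riesz} is not available: $|y|^{2s-1}$ grows, so $\Dsi$ is no longer a classical convolution. I will use the subtracted kernel
\begin{equation*}
K(x,y):=|x+y|^{2s-1}-|y|^{2s-1},
\end{equation*}
which, thanks to cancellation, is bounded and locally integrable in $y$: near $y=0$ and $y=-x$ the exponent $2s-1>0$ makes $K(x,\cdot)$ continuous, while for $|y|\to\infty$ a first-order Taylor expansion gives $K(x,y)=(2s-1)\,x\,\mathrm{sgn}(y)\,|y|^{2s-2}+O(|y|^{2s-3})$, which tends to $0$ because $2s-2<0$. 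Combined with $\rho_\eps\in L^1\cap L^\infty(\R)$ (Section \ref{sec.approx}), this makes the right-hand side of \eqref{eq:no_ex2_1D.000} absolutely convergent.

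\textbf{Inversion via a subtracted test function.} To rigorously derive \eqref{eq:no_ex2_1D.000} from $\Ds\Phi_\eps=-\rho_\eps$, I adapt the test-function argument of Lemma \ref{Lem.Green}: replace the single mollified Green function by the difference
\begin{equation*}
\Psi_\delta^x(y):=\Dsi\!\left(\tfrac{\chi_{B_{\delta}(x)}}{|B_{\delta}(x)|}\right)(y)-\Dsi\!\left(\tfrac{\chi_{B_{\delta}(0)}}{|B_{\delta}(0)|}\right)(y),
\end{equation*}
understood (for instance through Fourier multiplication by $|\xi|^{-2s}$ on the complement of constants) so that each summand is meaningful modulo an additive constant but the difference is unambiguous, uniformly bounded in $y$ and in $\delta$, and converges pointwise to (a constant times) $K(x,y)$ as $\delta\to0$, by the same Taylor-expansion mechanism as for $K$ itself. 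Testing the equation for $v_\eps$ against $\chi_{[\tau,\bar t]}(t)\Psi_\delta^x(y)$, using Fubini and the pairing $\Dsi\Ds\phi_\eps(v_\eps)=\phi_\eps(v_\eps)$ (justified by $\phi_\eps(v_\eps)\in L^2_{\mathrm{loc}}((0,\infty);\dot H^{s}(\R))$, Section \ref{sec.approx}), the time-derivative term produces $\int_\R\rho_\eps(y)\Psi_\delta^x(y)\,\d y$, while the spatial term produces the mean of $\Phi_\eps$ on $B_\delta(x)$ minus the mean on $B_\delta(0)$. Letting $\delta\to0$ with dominated convergence on the right (the uniform bound on $\Psi_\delta^x$ is the domination) and Lebesgue differentiation on the left (where continuity of $\Phi_\eps$ in $x$ comes from the smoothness of $v_\eps$ for $t>0$, cf.\ \cite{VPQR13}) yields $\Phi_\eps(x)-\Phi_\eps(0)=\int_\R\rho_\eps(-y)K(x,y)\,\d y$, which after undoing the substitution $\phi(u_\eps)=\phi_\eps(v_\eps)+\phi(\eps)$ is exactly \eqref{eq:no_ex2_1D.000} (the constant $\phi(\eps)$ cancels between $x$ and $0$).

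\textbf{Main obstacle.} The difficulty absent in Lemma \ref{Lem.Green} is that the fundamental solution of $\Ds$ on $\R$ is unbounded at infinity, so the naive Riesz-potential inversion diverges and the test function of Lemma \ref{Lem.Green} is no longer admissible. The entire argument rests on the zero-mean property of $\rho_\eps$ (from mass conservation), which is exactly what renders the renormalized kernel $K(x,y)$ bounded in $y$ and makes the subtracted test function $\Psi_\delta^x$ uniformly bounded in $\delta$; once this cancellation is isolated, the remainder of the computation is a routine adaptation of Lemma \ref{Lem.Green}.
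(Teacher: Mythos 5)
Your argument is essentially the paper's own proof of Lemma \ref{Lem.Green.1}: the paper likewise tests the equation with a mollified, subtracted kernel ($G_\delta(x-\cdot)-G_\delta(\cdot)$, where $G_\delta=|\cdot|^{2s-1}\ast\zeta_\delta$ so that $\Ds G_\delta=\zeta_\delta$), dominates using the boundedness of the difference kernel, and passes to the limit $\delta\searrow 0$ exactly as you do; your use of normalized ball indicators instead of a smooth bump and the bookkeeping via $v_\eps,\phi_\eps$ (with $\phi(\eps)$ cancelling between $x$ and $0$) are cosmetic variations. The only inaccuracy is the closing claim that the zero-mean property $\int_\R\rho_\eps\,\d y=0$ coming from mass conservation is what makes $K(x,y)=|x+y|^{2s-1}-|y|^{2s-1}$ bounded and is what the argument rests on: the bound $|K(x,y)|\le |x|^{2s-1}$ follows from the elementary inequality \eqref{eq:ine} (equivalently, from your Taylor expansion) and is a property of the kernel alone — the relevant cancellation is the subtraction built into the test function, not any cancellation in $\rho_\eps$, and indeed neither your derivation nor the paper's ever uses $\int_\R\rho_\eps\,\d y=0$.
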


\begin{proof}
For any $\delta>0$, let $\zeta_\delta$ denote a smooth function with compact support converging to the delta function in the origin as $\delta\searrow 0$, namely
\[
\lim_{\delta\searrow 0}\int_{\R}\zeta_\delta(x) \phi(x) \d x = \phi(0), \,\,\,\forall \phi \hbox{ smooth. }
\]
We can assume that $\zeta_\delta\ge 0$ and that $\int_{\R}\zeta_\delta(x) d x = 1.$
Then, denote with $G_\delta$ the function given
by expression
\begin{equation}
\label{eq:Gdelta}
G_\delta(x):=\int_{\R}\vert x- y\vert^{2s -1}\zeta_\delta(y)\d y.
\end{equation}
Note that,
even if $2s\ge 1$, the function $G_\delta$ is well defined
and finite for any $x$ in $\R$
due to the fact that $\zeta_\delta$ has compact support.
As a matter of fact, we have
that $\Ds G_\delta = \zeta_\delta$.
Moreover, since $\int_{\R}\zeta_\delta \d x=1$,
using the elementary inequality
\begin{equation}
\label{eq:ine}
\vert  A^\alpha -B^\alpha\vert\le \vert  A - B \vert^{\alpha},
\,\,\,\alpha\in (0,1), \,\,\forall A, B\in (0,+\infty),
\end{equation}
 we have
\begin{equation}
\label{eq:uniformv}
\vert G_\delta(x) - \vert x \vert^{2s-1}\vert =
\vert \int_{\R}\big(\vert x-y\vert^{2s-1} - \vert x\vert^{2s-1}\big)\zeta_\delta( y) \d y\vert
\le \int_{\R}\vert y\vert^{2s-1}\zeta_\delta(y) \d y \,\,\,\,\forall x\in \R.
\end{equation}
Then, if we let $\delta\searrow 0$, we immediately get that
$G_\delta(x)\xrightarrow{\delta\searrow 0}\vert x\vert^{2s-1}$ uniformly in $\R$.
Now, for $x\in \R$,
we multiply the equation \eqref{eq:FrUF_approx} with $G_\delta(x-\cdot) -G_\delta(\cdot)$.
and we integrate on $\R\times(\tau, \bar t)$, with $0<\tau<\bar t$. We get,
\begin{equation}
\label{eq:no_ex1_1D}
\int_{\R}\rho_\eps(y) \Big(G_\delta(x-y) - G_\delta(y) \Big)\d y + \int_{\R}\big(W_\eps(y)
-(\bar t-\tau)\phi(\eps)\big)(\zeta_\delta(x-y) - \zeta_\delta(y))\d y = 0,
\end{equation}
where
\begin{equation}\label{notations.W}
W_\eps(x) := \int_{\tau}^{\bar t} \phi(u_{\eps}(x,t))\d t, \hbox{ for } x\in \R.
\end{equation}
Note that since $n\ge 2s-1>0$, we have
$\phi(u_\eps) = -u_{\eps}^{-n}$.
Now, using \eqref{eq:uniformv} we have that,
for any fixed $x\in \R$ and for a sufficiently
small $\delta$,
\begin{align*}
\vert \rho_\eps(y) \Big(G_\delta(x-y) - G_\delta(y) \Big)\vert &\le
\vert \rho_\eps(y)\vert +
 \vert \rho_\eps(y)\vert\Big(\vert x-y\vert^{2s-1}-\vert y\vert^{2s-1}\Big)\vert\\
&\le (1+ \vert x\vert^{2s-1} )
\vert \rho_\eps(y)\vert, \,\,\,\forall y\in \R
\end{align*}
where we used \eqref{eq:ine} in the last inequality.
On the other hand, we have that for any $x\in \R$
 the function $y\mapsto (1 +\vert x\vert^{2s-1})\vert \rho_\eps(y)\vert$
 is in $L^1(\R)$.
Hence, recalling that (for fixed $\eps$) $W_\eps$ is a smooth function, we can pass to the limit as $\delta\searrow 0$ in a standard way
and obtain
\begin{equation}
\label{eq:no_ex2_1D}
W_\eps(x) -W_\eps(0) =
\int_{\R}\rho_\eps(-y)\big(\vert x + y\vert^{2s -1} - \vert y\vert^{2s -1} \big) \d y\,,
\end{equation}
which is exactly \eqref{eq:no_ex2_1D.000}.\end{proof}

We are now in the position to prove that $\bar u := \lim_{\eps\searrow 0} u_\eps \equiv 0$. We reason by contradiction and we assume that $\bar u(0,\bar t)>0$ for some $\bar t>0$. \normalcolor We keep the notations \eqref{notations.W} for $W_\eps$ and $\rho_\varepsilon$.

Combining \eqref{eq:no_ex2_1D.000} with \eqref{eq:ine} (in the equivalent form \eqref{eq:no_ex2_1D}, we obtain that $W_\eps$ is H\"older continuous, namely
\begin{equation}
\label{eq:holder}
\vert W_\eps(x) -W_\eps(0)\vert \le C\vert x\vert^{2s -1} \hbox{ for } x\in \R,
\end{equation}
with
\[
\sup_{\varepsilon>0}\int_{\R} \vert \rho_\eps\vert \d x\le C<+\infty,
\]
which gives
\[
  W_\eps(x) \ge W_\eps(0)-C\vert x\vert^{2s-1}.
\]
Thus, using the  Benil\'an-Crandall estimate in the form
\eqref{eq:AB_decw}, we have that there exists a constant $C=C(\tau,\bar t,n,\|u_0\|_{L^1(\R)})$
such that
\begin{equation}
\phi(u_\eps(x,\tau))\ge C\Big(\phi(u_\eps(0,\bar t)) -  \vert x\vert^{2s-1} \Big) \hbox{ for any } x\in \R.
\end{equation}
Recaling that $\phi(u)=-u^{-n}$, we have that
\begin{equation}
\label{eq:le}
u_\eps(x,\tau) \ge \left( \frac{C}{\frac{1}{u_\eps(0,\bar t)^n}  + \vert x\vert^{2s-1}}   \right)^{1/n}.
\end{equation}
Thus, if we let $\eps\searrow 0$, the inequality above is conserved in the limit, namely
\begin{equation}
\bar u(x,\tau) \ge \Big( \frac{C}{\frac{1}{\bar u(0,\bar t)^n}  + \vert x\vert^{2s-1}}   \Big)^{1/n}.
\end{equation}
Now, recall that we are assuming that $\bar u(0,\bar t)>0$ (hence $1/\bar u(0,\bar t)$ is finite).
This is manifestly
impossible if $n\ge 2s-1$ since the function $(1+ \vert x\vert)^{-(2s -1)/n}$ is integrable
only if $n<2s-1$, while the function $\bar u(\cdot,t)$ is in $L^1(\R)$ for any $t>0$.

This concludes the proof of Theorem \ref{th:non_ex_1D} when $s\in (1/2,1)$ with $n\ge 2s-1$.

\subsection{Non existence when $s=1/2$ and $n>0$}\label{subsec.1D.mezzo}

Now we are going to discuss the case $s=1/2$ and $n>0$. Recall that for $n=0$ there is an explicit positive solution, given in \eqref{explicit.sol.mezzo}. This situation presents some extra difficulties because
for $s=1/2$ the Green function is of logarithmic type and the corresponding
integral analogous to the one in the right hand side of
\eqref{eq:no_ex2_1D} seems difficult to handle. We rely on the analysis
for $s>\frac{1}{2}$ by noting that the lower bound \eqref{eq:le}, which is one of the
main points in the argument above, has a ``good dependence''
on $s$, namely the constant $C$ in \eqref{eq:le} does not depend on $s$.
Thus, our strategy in proving a lower bound like \eqref{eq:le}
for the solution of the $s=1/2$ case consists in approximating this solution
with solutions of the problem \eqref{eq:FrUF_approx} with fractional laplacian of order $s>1/2$.
To make this argument rigorous,
we need the following lemma on the continuity of the solution
operator with respect to the fractional order of derivation
\begin{lemma}
\label{lemma:onehalf}
Let $\psi:\R \to \R$ be a smooth and non decreasing function with $\psi(0)=0$.
Let $s_\delta$ be a sequence of real numbers with $s_\delta>\frac{1}{2}$
and such that $s_\delta\xrightarrow{\delta\searrow 0} 1/2$.
Let $v_\delta$ be the solution to
\begin{equation}
\label{eq:probdelta}
\begin{cases}
\partial_t v_\delta + \Dsd \psi(v_\delta) = 0 \,\,\,\hbox{ in } Q\\
v_\delta(0) = v_{0,\delta} \,\,\,\hbox{ in } \R.
\end{cases}
\end{equation}
Then, if $v_{0,\delta}\xrightarrow{\delta\searrow 0} v_0$
in $L^1(\R)$, there holds that
$v_\delta\xrightarrow{\delta\searrow 0} v$ in $C^0([0,+\infty);L^1(\R))$
and $v$ is the solution to
\begin{equation}
\label{eq:probnehalf}
\begin{cases}
\partial_t v + \Doh \psi(v) = 0 \,\,\,\hbox{ in } Q\\
v(0) = v_{0} \,\,\,\hbox{ in } \R.
\end{cases}
\end{equation}
\end{lemma}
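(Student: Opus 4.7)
The plan is to combine uniform a priori bounds with a compactness extraction, pass to the limit in the weak formulation of the equation, and conclude via uniqueness of the $s=1/2$ problem. By the $L^1$-contraction of the solution map, which holds separately for every $s\in(1/2,1]$, it suffices to prove the convergence when $v_0$ lies in a dense subclass of $L^1(\R)$, for instance for smooth compactly supported data; the general case follows by a triangle-inequality/density argument. For such data the solutions $v_\delta$ satisfy the uniform bounds $\|v_\delta(t)\|_{L^p(\R)}\le\|v_{0,\delta}\|_{L^p(\R)}$ for every $p\in[1,\infty]$, so $\{v_\delta\}$ and $\{\psi(v_\delta)\}$ are uniformly bounded in $L^\infty(Q)$.

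Next I would establish equicontinuity in space and time, uniformly in $\delta$. Spatial equicontinuity follows from the translation invariance of \eqref{eq:probdelta} combined with the $L^1$-contraction:
\begin{equation*}
\|v_\delta(\cdot+y,t)-v_\delta(\cdot,t)\|_{L^1(\R)}\le \|v_{0,\delta}(\cdot+y)-v_{0,\delta}\|_{L^1(\R)},
\end{equation*}
which is uniformly small in $\delta$ and $t$ as $|y|\to 0$. Time equicontinuity comes from the same contraction, giving $\|v_\delta(t+h)-v_\delta(t)\|_{L^1(\R)}\le\|v_\delta(h)-v_{0,\delta}\|_{L^1(\R)}$; the last expression is controlled, after integrating the equation against a sign-regularization and using Kato's inequality, by $h\,\|\Dsd\psi(v_{0,\delta})\|_{L^1(\R)}$. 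Since $\psi(v_{0,\delta})$ is smooth and compactly supported and $\Dsd$ depends continuously on $s_\delta$ when acting on such functions, this estimate is uniform in $\delta$. Together with a tightness bound obtained by testing the equation against a slowly decaying radial cutoff, Kolmogorov-Riesz-Fréchet extracts a subsequence $v_{\delta_k}\to v$ in $C^0([0,T];L^1(\R))$ for every $T>0$.

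I would then pass to the limit in the weak formulation. For each $\zeta\in C^\infty_c(Q)$, the singular-integral representation \eqref{eq:fract_lap} shows that $\Dsd\zeta(x)\to\Doh\zeta(x)$ pointwise, uniformly on compact sets, and with an integrable majorant from the rapid decay of $\zeta$ and its derivatives. Since $\psi$ is smooth and $v_{\delta_k}\to v$ in $L^1_{loc}(Q)$ with $v_{\delta_k}$ uniformly bounded, one gets $\psi(v_{\delta_k})\to\psi(v)$ in $L^1_{loc}(Q)$. The weak formulation of \eqref{eq:probdelta} then passes to the limit along $\delta_k$, and identifies $v$ as a weak solution of \eqref{eq:probnehalf} with datum $v_0$. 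Uniqueness of weak solutions to the $s=1/2$ equation, again from \cite{VPQR13}, upgrades the subsequential convergence to convergence of the full family in $C^0([0,+\infty);L^1(\R))$.

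The main obstacle I anticipate is producing the time modulus of continuity uniformly in $\delta$: one must verify that $\|\Dsd\psi(v_{0,\delta})\|_{L^1(\R)}$ remains bounded as $s_\delta\searrow 1/2$, which requires $s$-uniform control of the scaling constant $c(N,s_\delta)$ in \eqref{eq:fract_lap} together with control of the singular and tail contributions of $\Dsd$ acting on a fixed smooth compactly supported function. Tightness in space is a secondary delicate point, because nonlocal diffusion has infinite speed of propagation; an energy estimate against a radial weight of the form $(1+|x|^2)^{-\alpha}$ with small $\alpha$ suffices to control the tails uniformly in $\delta$.
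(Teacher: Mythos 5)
Your scheme is, in substance, the argument the paper is pointing to: the paper's own proof of Lemma \ref{lemma:onehalf} consists of a single sentence, declaring it ``an adaptation of \cite[Theorem 10.1, Section 10]{pqrv2}'', and the argument in that reference is precisely a compactness proof of the type you outline --- reduction to nice data via the $s$-uniform $L^1$-contraction, uniform $L^p$ bounds, spatial equicontinuity by translation invariance, temporal equicontinuity from the accretivity estimate $\|v_\delta(t)-v_{0,\delta}\|_{L^1(\R)}\le t\,\|\Dsd\psi(v_{0,\delta})\|_{L^1(\R)}$, tightness, passage to the limit against test functions using $\Dsd\zeta\to\Doh\zeta$, and identification of the limit by uniqueness. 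The $s$-uniformity issues you flag are the right ones to worry about and they do work out near $s=1/2$: the constant $c(1,s)$ is continuous and nondegenerate there, and for $\zeta\in C^\infty_c$ the functions $\Dsd\zeta$ decay like $|x|^{-(1+2s_\delta)}\le |x|^{-2}$ for $|x|\ge 1$, giving the common integrable majorant you need.

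The one step you leave genuinely under-justified is the final identification. Your compactness produces $v$ only as a bounded \emph{very weak} solution (the operator sits entirely on the test function), while the uniqueness theory of \cite{pqrv2,VPQR13} that you invoke is formulated for weak energy solutions, requiring $\psi(v)\in L^2_{loc}(0,\infty;\dot H^{1/2}(\R))$. To close this, record in addition the $\delta$-uniform energy estimate obtained by testing \eqref{eq:probdelta} with $\psi(v_\delta)$: with $\Psi'=\psi$, $\int_0^T\!\!\int_\R \big|\Lap^{s_\delta/2}\psi(v_\delta)\big|^2\,\d x\,\dt \le \int_\R \Psi(v_{0,\delta})\,\d x \le C$, uniformly in $\delta$, and pass it to the limit (e.g.\ via the Fourier transform and Fatou, using that $\psi(v_\delta)\to\psi(v)$ in $L^1$ for a.e.\ $t$) to conclude that the limit lies in the $s=1/2$ energy class; alternatively, quote a uniqueness statement valid for bounded very weak solutions. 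With that addition the subsequential limit is identified with the solution of \eqref{eq:probnehalf}, and convergence of the full family in $C^0([0,+\infty);L^1(\R))$ follows as you say.
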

\begin{proof} The proof is an adaptation of the one given in \cite[Theorem 10.1, Section 10]{pqrv2}.
\end{proof}\normalcolor
We use the above Lemma in the following way. For any $\eps>0$ denote with $u_\eps$ the solution to
\begin{equation*}
\begin{cases}
\partial_t u_\eps + \Doh (\phi(u_\eps)) = 0, \,\,\,\hbox{ in } \R\times (0,+\infty)\\
u_\eps(0) = u_{0,\eps}=u_0 + \eps \,\,\,\hbox{ in } \R.
\end{cases}
\end{equation*}
The above Lemma shows that $u_\eps$ can be approximated when
$\delta\searrow 0$ with the solution $u_{\delta,\eps}$  of
\begin{equation*}
\begin{cases}
\partial_t u_{\delta,\eps} + \Dsd(\phi(u_{\delta,\eps})) =0,
 \,\,\,\hbox{ in } \R\times (0,+\infty) \\
u_{\delta,\eps}(0) = u_{0,\delta,\eps} \,\,\,\,\hbox{ in } \R.
\end{cases}
\end{equation*}
To be precise, the above Lemma shows that the solution $v_\eps=u_\eps-\eps$ can be approximated by $v_{\delta,\eps}:=u_{\delta,\eps}-\eps$.

We are in the position to use Lemma \ref{Lem.Green.1} and repeat all the subsequent argument, for the approximations $u_{\delta,\eps}$ which fall in that range of parameters. Denote, for any $(x,t) \in \R\times (0,+\infty)$,
 $\bar u(x,t) :=\lim_{\eps\searrow 0}u_\eps(x,t)$
and assume by contradiction that $\bar u(0,\bar t)>0$. Since $s_\delta\searrow 1/2$
as $\delta\to 0$, we can fix $\delta$ small enough in such a way that $n\ge 2s_\delta-1>0$.
Thus, we have that
$u_{\delta,\eps}$ satisfies the lower estimate \eqref{eq:le} which becomes
\begin{equation*}
u_{\delta,\eps}(x,\tau) \ge \Big( \frac{C}{\frac{1}{u_{\delta,\eps}(0,\bar t)^n}
+ \vert x\vert^{2s_\delta-1}}   \Big)^{1/n}
\end{equation*}
for some $\tau<\bar t$. It is important to notice that the constant $C$ that appears
above, as well as the other constants that will come up in what follows, does not depend
on $s$ and hence on $\delta$. Thus, if we let $\delta\searrow 0$, we have a similar
lower bound for $u_\eps$, namely
$$
u_{\eps}(x,\tau) \ge \Big( \frac{C}{\frac{1}{u_{\eps}(0,\bar t)^n}  + 1}   \Big)^{1/n},
\,\,\,\,\hbox{ for any } x\in \R.
$$
Now, we can let $\eps\searrow 0$, obtaining that
$$
\bar u(x,\tau) \ge \Big( \frac{C}{\frac{1}{\bar u(0,\bar t)^n}  + 1}   \Big)^{1/n},
\,\,\,\,\hbox{ for any } x\in \R,
$$
which contradicts the integrability of $\bar u(\cdot,\tau)$.
Thus, we have that $\bar u\equiv 0$ in $\R\times (0+\infty)$.

This concludes the proof of Theorem \ref{th:non_ex_1D} when $s=1/2$ and $n>0$.


\section{Existence of nontrivial solutions in 1D}\label{sec.exist.1D}

Let us now examine the situation for $N=1$ in the remaining cases, namely $2s-1>n\ge 0$, and $s=1/2$, $n= 0$, where we expect to find nontrivial  limit solutions, as announced in the statement of Theorem \ref{th:non_ex_1D}. This will prove the optimality of the range of exponents considered for non-existence and conclude the proof of Theorem \ref{th:non_ex_1D}.

\subsection{Case $s=1/2$, $n=0$}

This is a kind of exceptional case in the parameter diagram, see Figure 1. Indeed,  for $s=1/2$ there exists an explicit solution  of the evolution problem
\begin{equation}
\label{eq:logonehalf}
\begin{cases}
\partial_t u + \Doh(\log u) = 0\,\,\,\,\hbox{ in } \R\times (0,+\infty),\\
u(0) = u_0 \,\,\,\,\hbox{ in } \R,
\end{cases}
\end{equation}
with an initial
condition in $L^1(\R)$. It is given by the formula
\begin{equation}\label{explicit.sol.mezzo}
U(x,t)=\frac{2(T-t)}{1+|x|^2} \,\,\,\hbox{ in } \R\times (0,+\infty).
\end{equation}
We see that it has the separate-variable type as in the previous subsection, and  $U(\cdot, t)$ is in $L^1(\R)$ for any $t\ge 0$. It is very peculiar that the solution becomes identically zero in finite time. This is the so-called {\sl finite time extinction} phenomenon which is typical
of some ranges of fast diffusion, see \cite{JLVSmoothing} for standard diffusion and \cite{pqrv2, KL} for fractional diffusion. A simple comparison theorem implies that any  initial data $u_0(x)\ge c/(1+|x|^{2})$
produces a nontrivial limit solution equal or larger than $U(x,t)$ with $T\le c/2$, hence non trivial in a time interval $0<t<T$. In conclusion, by proving the existence of nontrivial solutions when $n=0$, we have made sure that in one dimension for $s=1/2$ the condition $n>0$ is sharp for non-existence.

\subsection{Existence for $s>1/2$, $n<2s-1$ }

Even if the diffusion nonlinearity is singular, the range is formally the same as the ``good fast diffusion'' range $1>m> (N-2s)/N$, considered in the general theory of \cite{pqrv2}, hence it is supercritical in the notation of that paper. But we recall that there only exponents $m>0$ were considered. The presence of the singular nonlinearity when $m\le 0$ makes it impossible to use just the same arguments to develop a general existence theory, though a number of basic results are expected to be the same.

We refer to the recent paper \cite{V-exist} by the third author, in which the Barenblatt solutions are constructed in this range of parameters; the proofs starts by the existence of a positive sub-solution. The sharp behaviour at infinity of Barenblatt solutions is established.

Finally, we mention that there exist a special class of {\sl very singular solutions} which are solutions of equation $u_t+(-\Delta)^s (u^m/m)=0$ in an appropriate sense specified in \cite{V-exist}, and has the explicit form
\begin{equation}
{}U(x,t)=C\,(T-1)^{1/(1-m)}|x|^{-2s/(1-m)},
\end{equation}
with a constant $C=C(m,N,s)>0$ which can be explicitly determined, see \cite{V-exist}. It happens that the spatial profile has a non-integrable singularity at $x=0$. It is also proved that such a formal solution is the limit of a monotone increasing sequence of Barenblatt solutions, we refer to \cite{V-exist} for further details.

The proof of Theorem \ref{th:non_ex_1D} is now complete.

\normalcolor


\section{Non-existence for classes of generalized solutions}
\label{sec.nonweak}

The trivial limits obtained in Theorems \ref{th:non ex} and \ref{th:non_ex_1D} are a clear indication that no solutions must exist in any other reasonable sense, since the approximation is monotone from above and the maximum principle is known to hold for regular solutions; hence we expect the non-existence for other classes of solutions. Indeed, this last fact needs proof, since limit of solutions could turn out to be unreliable.

Loosely speaking, we seek  a class of solutions for which the comparison principle holds. It turns out that such class is  difficult to choose, various non-equivalent classes of solutions obey the maximum principle, for example strong or {mild} solutions. We will prove the needed comparison result for strong solutions, because we are interested in short and clear arguments. It shall be mentioned that bounded (energy or even very) weak solutions are expected to be strong solutions, this is indeed true when $m>0$, cf. \cite{pqrv2, ottoL1}, but when $m\le 0$, this fact requires a different proof, which is quite long and technical and falls out the scope of this paper.

\begin{theor} \label{th:comparison}
Let $u$ be a strong solution of Equation $\eqref{geq.Phi}$ in the sense of Definition $\ref{def:weak_sol}$, corresponding to the initial datum $u_0\in L^1_+(\RN)$ and let $u_\eps$ be the solution of the Cauchy problem $\ref{eq:FrUF_approx}$\,, with initial datum $u_{\eps,0}:=u_0 +\eps$. Then,
\begin{equation}
\label{eq:comparison}
u \le u_\eps \,\,\,\,\hbox{ a.e. in }\, Q=  \RN\times (0,+\infty)\,.
\end{equation}
\end{theor}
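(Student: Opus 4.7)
The plan is to establish a one-sided $L^1$-contraction, namely that $t \mapsto \int_{\RN} (u(t) - u_\eps(t))_+ \, dx$ is nonincreasing, and then use the initial data to force this quantity to vanish. Since $u_\eps$ is classical (smooth in $Q$ by the results of Section \ref{sec.approx}) and $u$ is strong in the sense of Definition \ref{def:weak_sol} so that $\partial_t u(\cdot,t) \in L^1(\RN)$ for a.e.\ $t>0$ and $\phi(u)\in L^2_{loc}(0,\infty;\dot H^s(\RN))$, the difference $w:=u-u_\eps$ satisfies, as an $L^1$-valued identity for a.e.\ $t>0$,
\begin{equation*}
\partial_t w + \Ds\bigl(\phi(u)-\phi(u_\eps)\bigr)=0.
\end{equation*}

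Let $p_\delta:\R\to[0,1]$ be a smooth nondecreasing approximation of $\mathrm{sign}_+$, vanishing on $(-\infty,0]$ and equal to $1$ on $[\delta,\infty)$. The key tool is the Stroock--Varopoulos / Kato inequality for the fractional Laplacian: for $g\in \dot H^s(\RN)$,
\begin{equation*}
\int_{\RN} p_\delta(g)\,\Ds g\,dx = \frac{c(N,s)}{2}\iint_{\RN\times\RN}\frac{(p_\delta(g(x))-p_\delta(g(y)))(g(x)-g(y))}{|x-y|^{N+2s}}\,dx\,dy \ge 0,
\end{equation*}
the integrand being pointwise nonnegative by monotonicity of $p_\delta$. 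I would apply this with $g=\phi(u)-\phi(u_\eps)$ after testing the subtracted equation against $p_\delta(\phi(u)-\phi(u_\eps))$ and integrating over $\RN$. Because $\phi$ is strictly increasing, $p_\delta(\phi(u)-\phi(u_\eps))$ has the same sign as $u-u_\eps$, and passing $\delta\to 0$ via dominated convergence together with Stampacchia's chain rule yields
\begin{equation*}
\frac{d}{dt}\int_{\RN}(u-u_\eps)_+\,dx \le 0 \quad\text{for a.e.\ } t>0.
\end{equation*}

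To conclude, I would pass to the initial time: $u(t)\to u_0$ in $L^1(\RN)$ as $t\to 0^+$ (the sense in which a strong solution attains its datum) while $u_\eps(t)\to u_0+\eps$ in $L^1$ by the construction in Section \ref{sec.approx}, so that
\begin{equation*}
\lim_{t\to 0^+}\int_{\RN}(u(t)-u_\eps(t))_+\,dx = \int_{\RN}(u_0-u_0-\eps)_+\,dx = 0.
\end{equation*}
The monotonicity in $t$ then forces $(u-u_\eps)_+\equiv 0$ a.e.\ in $Q$, which is \eqref{eq:comparison}. The main obstacle is to make the test-function step fully rigorous: $p_\delta(\phi(u)-\phi(u_\eps))$ is not admissible in Definition \ref{def:weak_sol}, so one must approximate by genuine $C^\infty_c(Q)$ test functions, invoking the $\dot H^s$-regularity of $\phi(u)$ and $\phi(u_\eps)$ and a time cut-off away from $0$, while controlling the time-derivative pairing via $\partial_t u\in L^1(\RN)$. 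The Kato inequality itself is standard, but its combination with the $\phi$-monotonicity and with the rather weak notion of strong solution (no assumed pointwise equation or $L^p$ bound beyond $L^1$) is where care is needed; in particular the validity of $u(t)\to u_0$ in $L^1$ must either be built into the definition of ``strong solution with datum $u_0$'' or be verified by a separate energy/continuity argument.
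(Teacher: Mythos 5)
Your proposal is correct and follows essentially the same route as the paper: testing the subtracted equations with a smooth approximation $p_\delta$ of $\mathrm{sign}_+$ applied to $\phi(u)-\phi(u_\eps)$, discarding the nonlocal term by the Stroock--Varopoulos (Kato-type) inequality, removing $\delta$ by dominated convergence and the monotonicity of $\phi$, and integrating the resulting inequality $\frac{d}{dt}\int_{\RN}(u-u_\eps)_+\,\d x\le 0$ from the initial time where $(u_0-u_0-\eps)_+=0$. The technical caveats you flag (non-admissibility of the test function, requiring a further approximation with fast decay, and the $L^1$ attainment of the initial datum) are exactly the ones the paper acknowledges and likewise leaves to a density argument.
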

\begin{proof}Let $u$ be a strong solution, hence $\partial_t u\in L^1(Q)$\,. We denote with $p_\delta$ a smooth approximation of the sign function, namely $p_\delta$ is a a $C^1$ function with $0<p_\delta<1$, $p_\delta\equiv 0$ for $y\le 0$ and $p'_\delta>0$ for $y>0$. We take the difference between the equation for $u$ and the equation for $u_\eps$ and we test it with $p_\delta(\phi(u)-\phi(u_\eps))$. We have
\[
\int_{\RN}\partial_t(u-u_\eps)p_\delta(\phi(u)-\phi(u_\eps))\d x=-
\int_{\RN}\big[p_\delta(\phi(u)-\phi(u_\eps))\big]\,\Ds(\phi(u)-\phi(u_\eps)) \d x\,.
\]
Now, the second integral is non-negative thanks to the Stroock-Varopoulos
inequality, that reads in this case:
\begin{equation}\label{Strook-Varopoulos}
\int_{\RN}(p_\delta(\phi(u)-\phi(u_\eps)))\Ds(\phi(u)-\phi(u_\eps)) \d x
\ge \int_{\RN}\left|\Dsm(\Psi_\delta(\phi(u)-\phi(u_\eps)))\right|^2 \d x\ge 0
\end{equation}
where $p'_\delta=(\Psi'_\delta)^2$. We refer to \cite{pqrv2}, Lemma 5.2 for a proof.
Indeed, the above proof is formal, since the above inequality holds a class of functions with fast decay at infinity. To make it rigorous one should consider further approximations $\phi(u_j)-\phi(u_{\eps,j})$ with a fast decay at infinity, for example in the Schwartz class $\mathcal{S}(\RN)$ and then pass to the limit, noticing that the sign is preserved under such limit process. We leave the details to the interested reader. \normalcolor Thus, we have
\begin{equation}
\label{eq:comparison1}
\int_{\RN}\partial_t(u(x,t)-u_\eps(x,t))p_\delta(\phi(u(x,t))-\phi(u_\eps(x,t)))\d x\le 0 \,\,\,\,\hbox{ for any } t>0.
\end{equation}
Since $u_\eps = v_\eps+ \eps$ (by definition) and $v_\eps$ is a strong solution,
we have that
$\partial_t u_\eps\in L^1(\RN)$ for any $t>0$. Thus, we can use the dominated
convergence Theorem to remove the $\delta$ approximation in \eqref{eq:comparison1}.
Thus, we get
$$
\frac{\rm d}{{\rm d}t}\int_{\RN}(u(x,t)-u_\eps(x,t))_+\d x\le 0, \,\,\,\,\,\hbox{ for any } t>0,
$$
where we have used that $\hbox{sign}_{+}(u-u_\eps)=\hbox{sign}_+(\phi(u)-\phi(u_\eps))$ thanks
to the monotonicity of $\phi$. Thus, integrating
the above relation between $0$ and some arbitrary $t>0$, we get
$$ \int_{\RN}(u(x,t)-u_\eps(x,t))_{+}\d x\le \int_{\RN}(-\eps)_{+}\d x = 0,$$
that implies \eqref{eq:comparison}.
\end{proof}

Clearly, once we have this comparison principle at our disposal
the nonexistence result applies  trivially to strong solutions, thus providing the proof of Theorem \ref{th:nonex-strong}.

\section{Comparison between different nonlinearities}\label{sec.comparison.nonlin}

Up to now we have considered the problems of existence or non-existence of nontrivial limit solutions for equations of the type \eqref{geq.Phi} in the case of the most typical nonlinearities, those of the form $\phi(u)=u^m$ with $m>0$,  $\phi(u)=-u^{-n}$ for $n>0$, of $\phi(u)=\log(u)$ (which is a kind of case $m=n=0$. The results obtained in our previous sections can be extended to more general nonlinearities $\phi$ using the symmetrization comparison results proved by one of the authors and Volzone in papers \cite{VazVol1}, \cite{VazVol2}. The first one contains the basic comparison
results for solutions of the same equation, as well as the needed concepts and tools.
The comparison of solutions of equations with different nonlinearities is presented in \cite{VazVol1}: first, we need to recall the concepts of symmetric rearrangement of a positive real function and the order relation $\prec$ that can be taken from Section 1 of \cite{VazVol1}. Then we need the concept of comparison of nonlinearities, also called diffusivities (a bit vaguely).

\begin{defn}
Assume that $\phi,\Phi:\R_{+}\rightarrow\R{+}$ are two functions which are smooth in $(0,\infty)$. We say $\Phi$ is slower than $\phi$ if
\[
\Phi^{\prime}(r)\leq \phi^{\prime}(r), \quad\forall r>0.
\]
In this case we also say that $\Phi$ is less diffusive than $\phi$.
\end{defn}

Once we have introduced the concept of {\itshape diffusivities}, the basic parabolic
comparison for solutions of the same equation with different diffusivities is
proved in \cite{VazVol2}

\begin{theor}[Teorem 3.2 \cite{VazVol2}]\label{thm.comp.diff.diff}
Let $u$ be the nonnegative mild solution to problem
\begin{equation*} \label{eqcauchy.f}
\left\{ \begin{array} [c]{lll}%
u_t+(-\Delta)^{s}\phi(u)=f  &  & x\in\R^{N}, \ t>0\,,%
\\[6pt]
u(x,0)=u_{0}(x) &  & x\in\R^{N},
\end{array}
\right. %
\end{equation*}
 with $0<s<1$, with initial data $u_0\in L^1(\RN)$, $u_0\ge 0$, right-hand side $f\in L^1(Q)$, $f\ge 0$, and smooth, strictly increasing nonlinearity $\phi(u)$ on $\R_+$ with
$\phi(0)=0$. Assume that $\Phi$ is a concave nonlinearity on $\R_+$ satisfying the same assumptions of $\phi$ and let $v$ be the mild solution to
\begin{equation}\label{symmorconc}
\left\{
\begin{array}
[c]{lll}%
v_t+(-\Delta)^{s}\Phi(v)=\widetilde{f}(|x|,t)  &  & x\in\R^{N}\,, \ t>0,%
\\[6pt]
v(x,0)=\widetilde{u}_{0}(x) &  & x\in\R^{N},
\end{array}
\right. %
\end{equation}
where $\widetilde{f}\in L^{1}(Q)$, $\widetilde{u}_{0} \in L^{1}(\R^{N})$ are nonnegative, radially symmetric decreasing functions with respect to $x$. If moreover $\Phi$ is slower than $\phi$, and
\[
u_{0}^{\#}(|x|)\prec\widetilde{u}_{0}(|x|),\quad f^{\#}(|x|,t)\prec\widetilde{f}(|x|,t),
\]
for almost all $t>0$, then  the conclusion $u^\#(|x|,t)\prec v(|x|,t)$    holds.
\end{theor}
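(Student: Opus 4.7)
The natural plan is to reduce the parabolic comparison to an elliptic one through the Crandall--Liggett implicit time discretization that underlies the notion of mild solution. Given a step $h>0$, one builds piecewise-constant-in-time approximations $u_h$ and $v_h$ where, on each interval, $u_h^{k+1}$ and $v_h^{k+1}$ solve the elliptic problems
\begin{equation*}
u_h^{k+1} + h\,\Ds\phi(u_h^{k+1}) = u_h^{k} + h\,f_h^{k+1},\qquad
v_h^{k+1} + h\,\Ds\Phi(v_h^{k+1}) = v_h^{k} + h\,\widetilde f_h^{k+1},
\end{equation*}
with $v_h^{k+1}$ radially symmetric decreasing by uniqueness. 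If the concentration comparison $(u_h^{k})^{\#}\prec v_h^{k}$ can be propagated from one step to the next, then letting $h\to 0^{+}$ and using the $L^1$-continuous dependence of mild solutions on the data delivers $u^{\#}(\cdot,t)\prec v(\cdot,t)$ for almost every $t>0$.

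\textbf{The elliptic core.} The whole matter thus reduces to the following elliptic statement, with $h$ fixed: if $U,F\ge 0$ satisfy $U+h\,\Ds\phi(U)=F$, and if $V\ge 0$ is the radially decreasing solution of $V+h\,\Ds\Phi(V)=\widetilde F$ with $\widetilde F$ radially decreasing and $F^{\#}\prec\widetilde F$, then $U^{\#}\prec V$. I would break this into two comparisons. Step \emph{(i)}: keep the nonlinearity $\phi$ fixed and consider the auxiliary radial solution $\widetilde U$ of $\widetilde U+h\,\Ds\phi(\widetilde U)=\widetilde F$; the inequality $U^{\#}\prec \widetilde U$ is exactly the basic symmetrization result for the nonlocal elliptic resolvent proved in \cite{VazVol1}, which rests on the fractional P\'olya--Szeg\H{o} inequality. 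Step \emph{(ii)}: compare $\widetilde U$ with $V$, both radial and decreasing, sharing the same right-hand side $\widetilde F$ but having different nonlinearities $\phi$ and $\Phi$. Integrating the difference of the two equations on a ball $B_r$ and rewriting $\int_{B_r}\Ds\psi$ as a nonlocal flux through $\partial B_r$ for radially decreasing $\psi$, the slowness condition $\Phi'\le \phi'$ together with the concavity of $\Phi$ produces the correct sign to conclude $\widetilde U^{\#}\prec V$. Composing \emph{(i)} and \emph{(ii)} yields the elliptic claim.

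\textbf{Main obstacle and closure.} The genuinely delicate step is \emph{(ii)}: unlike the local second-order case there is no pointwise Talenti--Weinberger comparison for $\Ds$, so the argument must be run entirely in integrated form on balls, and the concavity of $\Phi$ is what ensures that the boundary flux terms produced by integration against a radial cut-off carry a sign compatible with the concentration relation. Once this is in place, the propagation $(u_h^{k})^{\#}\prec v_h^{k} \Longrightarrow (u_h^{k+1})^{\#}\prec v_h^{k+1}$ is automatic, since both the rearrangement inequality and the relation $\prec$ are preserved by the nonlinear resolvent $(I+h\Ds\phi)^{-1}$ applied to the respective data $u_h^k+hf_h^{k+1}$ and $v_h^k+h\widetilde f_h^{k+1}$ (using additivity of $\prec$ under sums of concentration-ordered terms). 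The final passage $h\to 0$ uses only the standard convergence of the Crandall--Liggett scheme in $L^1$ for the $m$-accretive operators associated with $\phi$ and $\Phi$, which is the very framework in which the mild solutions of \cite{pqrv1, pqrv2, VPQR13} are constructed; this delivers $u^{\#}(\cdot,t)\prec v(\cdot,t)$ and finishes the proof.
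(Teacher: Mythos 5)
The theorem you are proving is not proved in this paper at all: it is quoted verbatim as Theorem 3.2 of \cite{VazVol2}, so the comparison has to be made with the argument of that reference. Your outer scaffolding does match it: mild solutions are indeed handled through the Crandall--Liggett implicit time discretization, the relation $\prec$ is propagated step by step using the standard lemmas (preservation of $\prec$ under sums when the dominating terms are radially decreasing, $L^1$-stability of the scheme), and everything reduces to a single elliptic comparison for the resolvent equations $U+h\Ds\phi(U)=F$ and $V+h\Ds\Phi(V)=\widetilde F$.

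The genuine gap is in your elliptic core, specifically in the order of the two reductions. Your Step (i) invokes, for the \emph{arbitrary} smooth increasing nonlinearity $\phi$, the same-nonlinearity concentration comparison $U^{\#}\prec\widetilde U$ between the solution with data $F$ and the radial solution with data $\widetilde F$. That statement is exactly the one that is \emph{not} available in this generality: the elliptic symmetrization theorem of \cite{VazVol1} yields the concentration comparison only when the nonlinearity of the symmetrized problem is concave, and for convex nonlinearities (e.g.\ $\phi(u)=u^{m}$, $m>1$, which the present theorem allows) the comparison is known to fail in general --- this is precisely why the hypothesis of the theorem asks for concavity of the slower nonlinearity $\Phi$ only, and places no convexity/concavity restriction on $\phi$. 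Consequently the decomposition ``first symmetrize with $\phi$ fixed, then change $\phi$ into $\Phi$ between radial solutions'' cannot work as stated; in the actual proof the symmetrization and the change of diffusivity are performed in one comparison, run on the concentration functions, in which only the concavity of $\Phi$ (together with $\Phi'\le\phi'$) produces the sign. Relatedly, your Step (ii) rests on rewriting $\int_{B_r}\Ds\psi$ as a flux through $\partial B_r$; no such local flux identity exists for $\Ds$, and the known arguments localize instead through the Caffarelli--Silvestre extension and symmetrize the extended degenerate problem slice by slice. As written, both halves of the elliptic step therefore lack a working mechanism, and the misplacement of the concavity hypothesis is a structural error rather than a detail to be filled in.
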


We apply the Theorem above to a pair of nonlinearities $\phi(u)$ and $\Phi_n(u)$, where $\Phi_n(u)=-u^{-n}$ for $n>0$ and $\Phi_0(u)=\log(u)$. We choose the case where $n$ satisfies the conditions of the non-existence theorems, and we get the following improved result.

\begin{theor}\label{Thm.non_ex-Symmetriz} The nonexistence results of  Theorem $\ref{th:non ex}$ and Theorem $\ref{th:non_ex_1D}$
still hold when instead of taking $\phi=\Phi_n$ we assume that $\phi$ is smooth, strictly increasing nonlinearity $\phi(u)$ on $\R_+$ with $\phi(0)=0$, and moreover
\begin{equation}
\phi'(u)\ge C(M)\, u^{-(n+1)}\qquad \mbox{for all } \ 0<u<M,
\end{equation}
and all $M>0$. The range of accepted $(N,s,n)$ is as in Theorem $\ref{th:non ex}$ and Theorem $\ref{th:non_ex_1D}$.
\end{theor}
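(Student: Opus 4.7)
The strategy is to invoke the concentration comparison Theorem \ref{thm.comp.diff.diff} of \cite{VazVol2} to reduce the problem to the canonical case $\Phi_n$, for which Theorems \ref{th:non ex} and \ref{th:non_ex_1D} are already established.

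First, a reduction step. By the density/$L^1$-contraction argument of Step 4 in the proof of Proposition \ref{th:noex1}, it is enough to treat $0 \leq u_0 \leq M$ with compact support. For such data, the $L^\infty$-bound \eqref{eq:li_decay} applied to \eqref{eq:FrUF_approx1} yields $0 \leq v_\eps \leq M$ in $Q$, so the values of $\phi$ outside $[0, M+1]$ play no role. We may thus replace $\phi$ by a smooth strictly increasing modification $\hat\phi$ that coincides with $\phi$ on $[0, M+1]$ and is affine with a conveniently chosen large slope on $(M+1, \infty)$, without altering either the relevant solutions $u_\eps$ or the hypothesis $\hat\phi'(r) \geq C(M+1)\, r^{-(n+1)}$ on $(0, M+1)$.

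Next, the comparison. Set $\kappa := \min\{1,\, C(M+1)/n\}$ (with an analogous choice involving the log when $n=0$). The hypothesis, together with the choice of the affine slope past $r = M+1$, gives the global inequality $\kappa\,\Phi_n'(r) \leq \hat\phi'(r)$ for all $r > 0$, so that $\kappa\Phi_n$ is slower than $\hat\phi$ in the sense of \cite{VazVol2}. Passing to the shift-approximations
\[
\hat\phi_\eps(v) = \hat\phi(v+\eps) - \hat\phi(\eps), \qquad \Psi_\eps(v) = \kappa\,\bigl[\Phi_n(v+\eps) - \Phi_n(\eps)\bigr],
\]
both nonlinearities vanish at $0$, are smooth and strictly increasing on $\R_+$, and $\Psi_\eps$ is concave. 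Since $\Psi_\eps' \leq \hat\phi_\eps'$ on $\R_+$, Theorem \ref{thm.comp.diff.diff} applied with initial datum $u_0$ for $v_\eps$ and its Schwarz rearrangement $u_0^\#$ for the $\Psi_\eps$-solution $w_\eps$ yields the concentration comparison $v_\eps^\#(|x|,t) \prec w_\eps(|x|,t)$ in $Q$.

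Finally, to conclude: up to the time rescaling $t\mapsto t/\kappa$, the nonlinearity $\Psi_\eps$ is exactly the approximation of $\Phi_n$ used in Sections \ref{sec.approx}--\ref{sec.1D}, hence Theorems \ref{th:non ex} and \ref{th:non_ex_1D} apply verbatim to $w_\eps$ in the full admissible range of $(N,s,n)$, giving $w_\eps \to 0$ locally in $L^1(Q)$, and uniformly on $\RN\times (\tau,\infty)$ for bounded data. The comparison then forces $\int_{B_r} v_\eps^\#(\cdot,t)\,\d x \to 0$ for every ball $B_r$, uniformly in $t \geq \tau > 0$; combined with the uniform bound $v_\eps^\# \leq M$ this gives $v_\eps \to 0$ locally in $L^1$, and the $L^\infty$-contraction upgrades it to the uniform convergence required in the bounded-data case. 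The main technical obstacle I foresee is reconciling the merely local nature of the hypothesis on $\phi$ with the global slower-than comparison required by Theorem \ref{thm.comp.diff.diff}; this is precisely what the modification $\phi\mapsto\hat\phi$ above handles, and the fact that bounded mild solutions are insensitive to modifications of $\phi$ outside their range is a standard matter (see the argument in \cite{VPQR13}).
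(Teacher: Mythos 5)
Your proposal follows essentially the same route as the paper: Section 9 proves this theorem precisely by applying the concentration-comparison Theorem \ref{thm.comp.diff.diff} of \cite{VazVol2} to the pair $\phi$, $\Phi_n$ and then invoking Theorems \ref{th:non ex} and \ref{th:non_ex_1D} for the canonical nonlinearity, and your truncation of $\phi$ outside the range of the bounded approximate solutions, the constant $\kappa$ and the time rescaling are exactly the details the paper leaves implicit. The only overstated point is the final claim that the $L^\infty$ bound ``upgrades'' the conclusion to uniform convergence, which concentration comparison alone does not give; but the paper's own argument yields just the same local $L^1$/vanishing-limit statement, so your approach matches it in substance.
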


\begin{cor} If a non-existence result is proved for a choice of $(N,s,n)$, it also hold for
 $(N,s,n_1)$ with $n_1>n$.
\end{cor}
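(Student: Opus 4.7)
The corollary is essentially a direct consequence of Theorem~\ref{Thm.non_ex-Symmetriz}, the symmetrization-based extension of the non-existence result to more general nonlinearities. The idea is to apply that theorem with the smaller exponent $n$ in the singularity condition (the one for which non-existence is already known) but with the \emph{larger}-exponent nonlinearity $\phi=\Phi_{n_1}(u)=-u^{-n_1}$. The whole proof reduces to verifying, for this choice of $\phi$, the local lower bound
\[
\phi'(u)\ \ge\ C(M)\,u^{-(n+1)}\qquad\text{for every }0<u<M.
\]

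To check it I would factor
\[
\Phi_{n_1}'(u)\ =\ n_1\,u^{-(n_1+1)}\ =\ n_1\,u^{\,n-n_1}\,u^{-(n+1)}.
\]
Since $n-n_1<0$, for $u\in(0,M)$ one has $u^{\,n-n_1}\ge M^{\,n-n_1}$, and therefore
\[
\Phi_{n_1}'(u)\ \ge\ n_1\,M^{\,n-n_1}\,u^{-(n+1)},\qquad 0<u<M,
\]
so the hypothesis of Theorem~\ref{Thm.non_ex-Symmetriz} holds with constant $C(M):=n_1\,M^{\,n-n_1}>0$, which depends only on $M$ (and on $n,n_1$). Theorem~\ref{Thm.non_ex-Symmetriz} then yields non-existence for the Cauchy problem with nonlinearity $\Phi_{n_1}$ and nonnegative $L^1$ data, in the same admissible range of $(N,s)$; this is precisely the non-existence statement for $(N,s,n_1)$.

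No serious obstacle is expected. The only conceptual point worth underlining is that the required condition is a \emph{one-sided local} comparison near the singular level $u=0$, not a global pointwise inequality between $\Phi_n'$ and $\Phi_{n_1}'$: indeed, for $u>1$ one actually has $\Phi_{n_1}'(u)<\Phi_n'(u)$ whenever $n_1>n$, so a na\"ive global comparison between the diffusivities would fail. The localization is natural, however, because the instantaneous extinction mechanism identified in the earlier sections is driven entirely by the behaviour of $\phi$ near $u=0$, and enlarging the singular exponent can only reinforce it. In the marginal case $n=0$ (where the reference nonlinearity is $\Phi_0=\log u$ and the target condition reads $\phi'(u)\ge C(M)u^{-1}$), the same factorization $\Phi_{n_1}'(u)=n_1 u^{-n_1}\cdot u^{-1}$ and the bound $u^{-n_1}\ge M^{-n_1}$ on $(0,M)$ give the required inequality with $C(M)=n_1 M^{-n_1}$, so the argument is uniform in the two regimes covered by Theorems~\ref{th:non ex} and \ref{th:non_ex_1D}.
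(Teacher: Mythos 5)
Your proposal is correct and is essentially the argument the paper intends: the corollary is stated as a direct application of Theorem \ref{Thm.non_ex-Symmetriz}, and your verification that $\Phi_{n_1}'(u)=n_1u^{-(n_1+1)}\ge n_1M^{\,n-n_1}u^{-(n+1)}$ on $(0,M)$ (with the analogous check in the logarithmic case $n=0$) is exactly the computation the paper leaves implicit. Your remark that only a one-sided local comparison near $u=0$ is needed, not a global ordering of the diffusivities, is a correct and useful clarification consistent with the paper's formulation.
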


This corollary sheds light into the conditions  of the phenomenon of non-existence.



\section{The elliptic problem}\label{sec.elliptic}

The techniques used to study the parabolic problem  allow us to prove analogous non-existence results for the following elliptic equation
\begin{equation}
\label{eq:elliptic.intro}
u + \Ds \phi(u) = f \,\,\,\,\hbox{ in } \RN,
\end{equation}
when $f\in L^1(\RN)$ and the nonlinearity $\phi$ is given by the formulas \eqref{eq:nonlinearity}. Writing $\phi(u)=v$ hence $u=\phi^{-1}(v):=\beta(v)$, we can rewrite it in the form
\begin{equation}
\label{eq:elliptic.intro2}
\Ds v +\beta(v)= f \,\,\,\,\hbox{ in } \RN.
\end{equation}
This is the fractional version of the equation treated by B\'enilan, Brezis and Crandall, for $s=1$ in their famous paper \cite{BBC}\,.
In the paper, they study existence for equation \eqref{eq:elliptic.intro2}
assuming that $\beta$ is a monotone function. More precisely, they assume
that $\beta$ is a maximal monotone graph (hence possibly multivalued) with
the property that $0\in \beta(0)$. On the other hand, our nonlinearity satisfies
$\beta(-\infty)=0$, which is the cause for the non-existence result that is not considered by them.

Equation \eqref{eq:elliptic.intro} appears as the iteration step on solving the parabolic equation \eqref{geq.Phi} by the implicit time discretization scheme suggested by the Crandall-Ligget Theorem \cite{CL71}. This explains why the solutions we get for the elliptic problem, Theorems \ref{th:elliptic_1} and \ref{th:non_ex_elliptic}, are so similar to the parabolic ones.

\subsection{Nonexistence of limit solutions}
We consider an approximate problems with $f\in L^1(\RN)\cap L^\infty(\RN)$
\begin{equation}
\label{eq:elliptic_approx1}
\begin{cases}
v + \Ds( \phi_\eps(v)) = f \hbox{ in } \RN\\
\phi_\eps(v):= \phi(v+\eps) - \phi(\eps).
\end{cases}
\end{equation}
Notice that for any $\eps>0$ the nonlinearity $\phi_\eps$
is smooth and monotone increasing. Therefore, the above
problem can be solved in a standard way
(see the discussion in \cite[Section 8.1]{VPQR13}),  and the solution turns out to be unique, bounded and nonnegative. Moreover,
\begin{equation}
\label{eq:linftyelliptic}
\| v_\eps\|_{L^\infty(\RN)}\le \|f\|_{L^\infty(\RN)}.
\end{equation}
and the standard comparison theorem holds.
Let us denote by $v_\varepsilon$ such solutions.
Next we define $u_\eps:= v_\eps + \eps$, noticing that $u_\varepsilon$ is the (unique) solution to
\begin{equation}
\label{eq:elliptic_approx2}
u_\eps + \Ds \phi(u_\eps) = f_\eps:=  f + \eps \,\,\hbox{ in } \RN\,.
\end{equation}
We also have
\begin{align}
& u_\eps\ge \eps \,\,\,\,\,\,\,\hbox{ in }\RN,\label{eq:basso}\\
&\| u_\eps - \eps\|_{L^1(\RN)}\le  \| f\|_{L^1(\RN)}\label{eq:L1u}
\end{align}
Recall that the sequence $u_\eps$ is monotone with respect to
$\eps$, therefore the limit function
\begin{equation}\label{limit-elliptic}
 \bar u(x) :=\lim_{\eps\searrow 0}u_\eps(x)
 \end{equation}
exists for almost any $x\in \RN$. Moreover, we have that
$0\le \bar u(x) \le \|f\|_{L^\infty(\RN)}$. In the case where $f\in L^1(\RN)$ we can use the property of $L^1$ contraction to show that there is also a limit for formula \eqref{limit-elliptic} though the function $u_\varepsilon$ are not necessarily bounded.

Analogously to the parabolic case, we will prove the following result.
\begin{theor}\label{th:elliptic_1}
For any given $f$ with  $f\in L^1(\RN)$, let $u_\eps$ be the solution of \eqref{eq:elliptic_approx2}.
Then the trivial limit
\begin{equation}
\label{eq:zero_lim_elliptic}
\lim_{\eps\searrow 0} u_\eps = 0 \hbox{ in } L^1_{loc}(\RN)
\end{equation}
holds true whenever $N\ge 2$ and $n\ge 0$, or $N=1$ and
\[
\mbox{$n\ge 0$ and $s\in (0,1/2)$; or $n> 2s-1$ and $s\in (1/2,1)$; or $n>0$ and $s=1/2$.}
\]
This range of exponents is optimal.
\end{theor}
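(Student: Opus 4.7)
The plan is to mirror the parabolic proofs of Section~\ref{sec.proofs} and Section~\ref{sec.1D}, with the simplification that in the elliptic setting there is no time variable so the Aronson--B\'enilan monotonicity is not needed --- inverting $\Ds$ directly produces a pointwise identity for $\phi(u_\varepsilon)$. I would first reduce to the case $f\in L^1(\RN)\cap L^\infty(\RN)$, deducing the general $L^1$ statement by a density argument analogous to Step~4 in Section~\ref{sec.proofs}, once the $L^1$-contraction property for \eqref{eq:elliptic_approx2} is recorded (which follows from monotonicity of $\phi_\varepsilon$ by a standard Kato-type argument).

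\textbf{Main case $N>2s$.} Rewrite the equation as $\Ds\phi_\varepsilon(v_\varepsilon)=f-v_\varepsilon$, which holds pointwise because $v_\varepsilon$ is smooth. Applying the Riesz inversion formula \eqref{eq:riesz} yields the elliptic analogue of Lemma~\ref{Lem.Green}, namely
\begin{equation*}
\phi(u_\varepsilon)(x) - \phi(\varepsilon) \;=\; c_{N,s}\int_{\RN}\frac{f(y)-v_\varepsilon(y)}{|x-y|^{N-2s}}\,\d y.
\end{equation*}
The rigorous derivation follows the Fubini argument of \eqref{Lem.1}--\eqref{Lem.2}: one tests the pointwise identity against $\Dsi(\chi_{B_\delta(x)}/|B_\delta(x)|)$, exchanges integrals, and lets $\delta\searrow 0$, using the pointwise bound \eqref{eq:bound_sopra} and dominated convergence. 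The right-hand side is then bounded uniformly in $\varepsilon$ and $x$ by splitting the integral on $B_1(0)$ and its complement and using $\|v_\varepsilon\|_\infty\le\|f\|_\infty$ together with $\|v_\varepsilon\|_1\le\|f\|_1$, exactly as in \eqref{eq:bound_rhs}. Since $\phi(\varepsilon)\to-\infty$, we conclude $\phi(u_\varepsilon)\to-\infty$ uniformly in $\RN$, whence $u_\varepsilon\to 0$ uniformly. This covers all $N\ge 2$ and also the subcritical case $N=1$ with $s\in(0,1/2)$.

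\textbf{The 1D cases $s\ge 1/2$.} For $N=1$, $s\in(1/2,1)$ and $n>2s-1$, I would test the pointwise relation $\Ds\phi_\varepsilon(v_\varepsilon)=f-v_\varepsilon$ against a mollification of the Green kernel $|x-y|^{2s-1}$ as in the proof of Lemma~\ref{Lem.Green.1}, obtaining
\begin{equation*}
\phi(u_\varepsilon)(x) - \phi(u_\varepsilon)(0) \;=\; \int_{\R}\bigl(|x+y|^{2s-1}-|y|^{2s-1}\bigr)\bigl(f(-y)-v_\varepsilon(-y)\bigr)\,\d y.
\end{equation*}
Combined with the elementary inequality \eqref{eq:ine} this yields the H\"older bound $|\phi(u_\varepsilon)(x)-\phi(u_\varepsilon)(0)|\le C|x|^{2s-1}$ uniformly in $\varepsilon$, with $C$ depending only on $\|f\|_1$. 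Reasoning by contradiction, if $\bar u(0)>0$ then $\phi(u_\varepsilon)(x)\ge \phi(u_\varepsilon)(0) - C|x|^{2s-1}$, which after inverting $\phi(u)=-u^{-n}$ gives the pointwise lower bound $\bar u(x)\ge\bigl(C/(\bar u(0)^{-n}+|x|^{2s-1})\bigr)^{1/n}$; this is non-integrable exactly when $n\ge 2s-1$, contradicting $\bar u\in L^1(\R)$. The endpoint $s=1/2$, $n>0$ is handled by the $s$-continuity argument of Subsection~\ref{subsec.1D.mezzo}: approximate by problems with $s_\delta\searrow 1/2$, $s_\delta>1/2$, apply the lower bound for $n>2s_\delta-1$, and pass to the limit. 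This requires an elliptic analogue of Lemma~\ref{lemma:onehalf}, which can be proved by the same resolvent-continuity technique as in \cite{pqrv2}.

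\textbf{Optimality and main obstacle.} Sharpness of the range is obtained by exhibiting nontrivial elliptic solutions in the complementary region. For $N=1$, $s=1/2$, $n=0$, the explicit parabolic solution \eqref{explicit.sol.mezzo} plugged into $f:=U(\cdot,t_0)+\Ds\phi(U(\cdot,t_0))$ produces an explicit nontrivial $L^1_+$ datum for which $u=U(\cdot,t_0)$ solves the elliptic equation; the range $s>1/2$, $n<2s-1$ is similarly covered using the Barenblatt-type solutions of \cite{V-exist}. The only genuinely delicate step is the Riesz-inversion identity: the candidate test function $\Dsi(\chi_{B_\delta(x)}/|B_\delta(x)|)$ is not admissible, and smoothness of $v_\varepsilon$ together with the decay bound \eqref{eq:bound_sopra} is essential to justify the $\delta\searrow 0$ passage. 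Everything else is a simplification of the parabolic arguments, since no time integration and no Aronson--B\'enilan estimate are needed.
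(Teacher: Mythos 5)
Your proposal follows the paper's own proof essentially step for step: the Riesz-inversion identity $\phi(u_\varepsilon)(x)-\phi(\varepsilon)=c_{N,s}\int (f(y)-v_\varepsilon(y))|x-y|^{2s-N}\,\d y$ with a uniform bound on the right-hand side for $N>2s$, the mollified-kernel/H\"older argument and integrability contradiction for $N=1$, $s\in(1/2,1)$, the $s_\delta\searrow 1/2$ approximation using $L^1$-continuity in $s$ for the endpoint, a density argument for general $f\in L^1$, and optimality via the explicit profile of \eqref{explicit.sol.mezzo} and the subsolutions of \cite{V-exist}. The argument is correct (your integrand, evaluated at $y$, is in fact the corrected form of the identity the paper writes), so no further comment is needed.
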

\begin{proof}The proof will be split into several steps. We repeat the scheme of the proof for the parabolic case, therefore we will just sketch the main arguments. We will begin with $f\in L^1(\RN)\cap L^\infty(\RN)$ and in Step 5 a density argument will allow us to treat the case $f\in L^1(\RN)$. The last step is devoted to prove the sharpness of the exponent's range.

\noindent$\bullet~$\textsc{Step 1. }\textit{The case $N> 2s$. }This case includes the proof of both the case  $N\ge 2$ and  $N=1$, $s\in(0,1/2)$. Proceeding as in Lemma \ref{Lem.Green}, multiplying by the test function
\[
\psi_n(\cdot)=\Dsi\left(\frac{\chi_{B_{1/n}(x)}}{|B_{1/n}(x)|}\right)(\cdot)
\]
and passing to the limit $n\to\infty$ we obtain
\begin{equation}
\phi(u_\varepsilon(x)) -\phi(\eps) = \int_{\RN}\frac{f(x) - u_\varepsilon(x)+\varepsilon}{\vert x- y\vert^{N-2s} }\d y.
\end{equation}
Notice that the right-hand side is uniformly bounded with respect to $\varepsilon>0$ and $x\in\RN$, because
\begin{equation}
\label{eq:bound_rhs.ELL}
\begin{split}
\int_{\RN}\frac{f(x) - u_\varepsilon(x)+\varepsilon}{\vert x- y\vert^{N-2s} }\d y
\le 2C\max\{\|f\|_{L^1(\R^N)}\,,\,\|f\|_{L^\infty(\R^N)}\}
\end{split}
\end{equation}
the proof being identical to the parabolic case, see formula \eqref{eq:bound_rhs} in the proof of Lemma \ref{Lem.Green}.
As a consequence, the following limits are uniform in $x\in \RN$:
$$
\lim_{\eps\searrow 0}\phi(u_\varepsilon(x)) = -\infty\,,\qquad \lim_{\eps\searrow 0} u_\varepsilon(x)  = 0\,.
$$
This clearly implies that $\bar u\equiv 0$ in $\RN$.\\

\noindent$\bullet~$\textsc{Step 2. }This case corresponds to $N=1$, $s\in (1/2,1)$, and $n\ge 2s-1$. Let
\[
\overline{u}(x):=\lim_{\eps\searrow 0} u_\varepsilon(x)\,,
\]
and assume by contradiction that $\overline{u}(0)>0$.
Proceeding as in Lemma \ref{Lem.Green.1}, we obtain the inequality
\begin{equation}
\label{eq:lb_elliptic}
u_\eps(x) \ge \Big( \frac{C}{\frac{1}{u_\eps(0)^n}  + \vert x\vert^{2s-1}}   \Big)^{1/n}.
\end{equation}
Thus, in the limit $\eps\searrow 0$, the same inequality holds for the limit function
$\bar u$ that is
$$
\bar u(x) \ge \Big( \frac{C}{\frac{1}{\bar u(0)^n}  + \vert x\vert^{2s-1}}   \Big)^{1/n}.
$$
Finally, we obtain a contradiction since $n\ge 2s-1$ and $\bar u\in L^1(\R)$ by construction.

\noindent$\bullet~$\textsc{Step 3. }This case corresponds to $N=1$, $s=1/2$ and $n>0$. We use the same strategy of the corresponding parabolic problem. For any $\eps>0$ we approximate the solution of
\begin{equation}
\label{eq:ell_N1}
\begin{cases}
u_\eps + (-\Delta)^{1/2}\phi(u_\eps) = f_\eps\\
f_\eps = f+ \eps
\end{cases}
\end{equation}
with solutions of
\begin{equation}
\label{eq:ell_N1delta}
\begin{cases}
u_{\eps,\delta} + (-\Delta)^{s_\delta}\phi(u_{\eps,\delta}) = f_{\eps,\delta}\\
f_{\eps,\delta} = f_\delta+ \eps
\end{cases}
\end{equation}
where $s_\delta\searrow 1/2$ as $\delta\to 0$. This can be done thanks to
the $L^1$ contractivity of the elliptic problem, cf. \cite[Theorem 10]{pqrv2}.
Hence, for $\delta$ small enough, we have that $u_{\eps,\delta}$ satisfies the lower bound
\eqref{eq:lb_elliptic}, namely
$$
u_{\eps,\delta}(x) \ge \Big( \frac{C}{\frac{1}{u_\eps(0)^n}  + \vert x\vert^{2s_\delta-1}}   \Big)^{1/n}.
$$
Next, we let $\delta\searrow 0$ and recall that the constant $C$ does not
depend on $s$ neither on $\delta$, hence we have that $u_\eps$ satisfies the desired lower bound
$$
u_{\eps}(x)\ge \Big( \frac{C}{\frac{1}{u_\eps(0)^n}  + 1}   \Big)^{1/n}.
$$
{As a result, we have a contradiction once we assume that
the limit function $\bar u = \lim_{\eps\searrow 0}u_\eps$ is
not identically equal to zero. }

\medskip

\noindent$\bullet~$\textsc{Step 5. }To conclude the proof, we need to discuss the general case
of unbounded forcing functions in $L^1(\RN)$. This can be done exactly as in the parabolic case, through a similar density
argument.

\medskip

\noindent$\bullet~$\textsc{Step 6. }\textit{Sharpness of the range of exponents. }We only have to show that the limit maybe nontrivial for some exponents $s$  and $n$ is one space dimension. Using the idea of paper \cite{V-exist} for the parabolic problem, we only need to show that there exists some nontrivial solution or subsolution that lies below the approximations $u_\varepsilon$.

\noindent$\bullet$ In the borderline case $s=1/2$ and $n=0$, we take the spatial profile of the explicit solution \eqref{explicit.sol.mezzo},
\[
F(x)=\frac{2}{1+|x|^2}
\]
that satisfies $\Ds( \log(F)) = 2F$ in $\R$. Therefore $F$   is a nontrivial and integrable solution of the equation
\[
u+\Ds( \log(u)) = f:= 3F\,.
\]

\noindent$\bullet$ In the case $s>1/2$ and $n>0$\,, $n<2s-1$\,, we use a similar approach using a special subsolution constructed in Section 3.2 of \cite{V-exist}.        \end{proof}

\subsection{Comparison between solutions and approximate solutions}
We want now to prove non-existence for a class of weak solutions, defined as follows.
\begin{defn}
\label{def:weak_sol_elliptic}
A function $u$ is a weak solution of equation \eqref{eq:elliptic.intro} if it is nonnegative and\\[3mm]
\noindent$\bullet$ $u\in  L^1(\RN)$,
${ \phi(u)}\in\dot H^{s}(\RN)$   \\[3mm]
\noindent$\bullet$ It satisfies, for all $\zeta\in C^\infty_{0}(\RN)$,
\begin{equation}
\label{eq:elliptic_weak}
\int_{\RN}u(x)\psi(x)\d x + \int_{\RN}\Dsm \phi(u(x))\Dsm \psi(x)\d x = \int_{\RN}f(x) \psi(x)\d x.
\end{equation}
\end{defn}
 As in the parabolic problem, our goal now is to prove the following non-existence result for the above class of weak solution to the elliptic equation \eqref{eq:elliptic.intro}, which is the elliptic counterpart of Theorem \ref{th:nonex-strong}.
\begin{theor}\label{th:non_ex_elliptic}
Let $u$ denote a nonnegative weak solution of the elliptic equation \eqref{eq:elliptic.intro}, corresponding to some $f\in L^1_+(\RN)$, in the range of parameters $(N,s,n)$ allowed in Theorem $\ref{th:elliptic_1}$. Then, $u\equiv 0$ in $\RN$, hence $f=0$. This range of exponents is optimal.
\end{theor}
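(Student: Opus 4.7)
The plan is to mimic, in the elliptic setting, the two-step strategy used for the parabolic problem in Theorem \ref{th:nonex-strong}: first establish a comparison inequality $u\le u_\varepsilon$ between a weak solution $u$ of \eqref{eq:elliptic.intro} and the approximate solutions $u_\varepsilon$ of \eqref{eq:elliptic_approx2}, and then invoke Theorem \ref{th:elliptic_1} to push the right-hand side to zero. Once comparison is in place, the conclusion $u\equiv 0$ is immediate from $0\le u\le u_\varepsilon \to 0$ in $L^1_{\mathrm{loc}}$, and the identity $f=u+\Ds\phi(u)$ together with $\phi(0+)=-\infty$ (incompatible with $\phi(u)\in\dot H^{s}(\RN)$ required by Definition \ref{def:weak_sol_elliptic}) forces $f\equiv 0$.

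To obtain the comparison, I would subtract the equations for $u$ and $u_\varepsilon$ to get
\[
(u-u_\varepsilon) + \Ds\bigl(\phi(u)-\phi(u_\varepsilon)\bigr) = -\varepsilon,
\]
test against $p_\delta(\phi(u)-\phi(u_\varepsilon))$, where $p_\delta$ is the smooth approximation of $\mathrm{sign}_+$ already used in the proof of Theorem \ref{th:comparison}, and apply the Stroock--Varopoulos inequality \eqref{Strook-Varopoulos} to the fractional Laplacian term. This yields
\[
\int_{\RN}(u-u_\varepsilon)\,p_\delta(\phi(u)-\phi(u_\varepsilon))\,\d x \;\le\; -\varepsilon\int_{\RN} p_\delta(\phi(u)-\phi(u_\varepsilon))\,\d x \;\le\; 0.
\]
Passing to the limit $\delta\searrow 0$ by dominated convergence (using $u,u_\varepsilon\in L^{1}(\RN)$) and exploiting the monotonicity of $\phi$ to identify $\mathrm{sign}_+(\phi(u)-\phi(u_\varepsilon))=\mathrm{sign}_+(u-u_\varepsilon)$ gives $\int_{\RN}(u-u_\varepsilon)_+\,\d x\le 0$, and hence $u\le u_\varepsilon$ a.e.

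The optimality of the parameter range is inherited from Theorem \ref{th:elliptic_1}, whose Step 6 exhibits explicit nontrivial (sub)solutions outside the prescribed range; these are weak solutions in the sense of Definition \ref{def:weak_sol_elliptic}, so the same examples prove sharpness here.

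The main technical obstacle is the rigorous justification of the test-function computation above. A weak solution is only required to satisfy $u\in L^{1}(\RN)$ and $\phi(u)\in\dot H^{s}(\RN)$; in particular $\phi(u)-\phi(u_\varepsilon)$ need not have the decay at infinity required for a direct application of Stroock--Varopoulos, and $p_\delta(\phi(u)-\phi(u_\varepsilon))$ need not belong to $\dot H^{s}$. The remedy, already indicated in the proof of Theorem \ref{th:comparison}, is to approximate $\phi(u)-\phi(u_\varepsilon)$ by Schwartz-class functions $w_j$ for which inequality \eqref{Strook-Varopoulos} holds literally, pass to the limit using lower semicontinuity of the quadratic form on the right-hand side of \eqref{Strook-Varopoulos}, and observe that the pointwise inequality $\mathrm{sign}_+(u-u_\varepsilon)\ge 0$ is preserved under this approximation. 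Once this technical point is settled, the rest of the argument is formally identical to the parabolic proof and yields the stated non-existence and its optimality.
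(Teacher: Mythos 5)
Your proposal is correct and follows essentially the same route as the paper: the paper proves this theorem by combining Theorem \ref{th:elliptic_1} with an elliptic comparison result (Proposition \ref{th:comparison_elliptic}) whose proof is exactly your computation — subtracting the equations, testing with $p_\delta(\phi(u)-\phi(u_\eps))$, applying the Stroock--Varopoulos inequality \eqref{Strook-Varopoulos}, letting $\delta\searrow 0$, and using monotonicity of $\phi$ to identify the positive-sign sets — with the same Schwartz-class approximation remark to handle decay at infinity, and with optimality inherited from Step 6 of Theorem \ref{th:elliptic_1}.
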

The proof of the above Theorem follows by combining the results of Theorem \ref{th:elliptic_1} for approximate solutions together with the following comparison result between approximate solutions and weak solutions, which is the elliptic counterpart of Theorem \ref{th:comparison}.
\begin{prop}
\label{th:comparison_elliptic}
Let $f$ be a given $L^1(\RN)$ function and let $u$ be a weak solution to the elliptic equation \eqref{eq:elliptic.intro} in the sense of Definition $\ref{def:weak_sol_elliptic}$. Then,
\begin{equation}
\label{eq:comparison_elliptic}
u\le u_\eps \,\,\,\,\hbox{ a.e. in } \RN,
\end{equation}
where $u_\eps$ is the solution \eqref{eq:elliptic_approx2} with right hand side
given by $f+\eps$.
\end{prop}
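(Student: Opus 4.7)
The plan is to mimic the proof of Theorem \ref{th:comparison}, replacing the role of the time derivative term by the algebraic term provided by the equation itself. Subtracting the equations satisfied by $u$ and $u_\eps$ yields the pointwise identity
\begin{equation*}
(u-u_\eps) + \Ds\bigl(\phi(u)-\phi(u_\eps)\bigr) = -\eps \qquad \text{in } \RN.
\end{equation*}
Since $u$ is a weak solution in the sense of Definition \ref{def:weak_sol_elliptic}, we have $\phi(u)\in\dot H^s(\RN)$, so $\phi(u)$ is finite a.e., which forces $u>0$ a.e.; together with $u_\eps\ge \eps$ this makes $w:=\phi(u)-\phi(u_\eps)$ well-defined a.e.\ and in $\dot H^s(\RN)$.

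Next I would test the difference equation against $p_\delta(w)$, where $p_\delta$ is the same smooth, non-decreasing approximation of the positive part of the sign function as in Theorem \ref{th:comparison}. This produces
\begin{equation*}
\int_{\RN}(u-u_\eps)\,p_\delta(w)\,\d x + \int_{\RN} p_\delta(w)\,\Ds w \,\d x = -\eps\int_{\RN} p_\delta(w)\,\d x.
\end{equation*}
Writing $p'_\delta = (\Psi'_\delta)^2$, the Stroock-Varopoulos inequality \eqref{Strook-Varopoulos} bounds the second integral on the left below by $\|\Dsm\Psi_\delta(w)\|^2_{L^2(\RN)}\ge 0$, while the right-hand side is non-positive since $p_\delta\ge 0$ and $\eps>0$. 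Hence $\int_{\RN}(u-u_\eps)\,p_\delta(w)\,\d x \le 0$ for every $\delta>0$.

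To conclude I would pass to the limit $\delta\to 0$: since $\phi$ is strictly monotone, $p_\delta(w)\to \chi_{\{u>u_\eps\}}$ pointwise a.e., and $\lvert(u-u_\eps)p_\delta(w)\rvert\le u+u_\eps$ provides an $L^1(\RN)$ dominating function (using $u_\eps-\eps\in L^1(\RN)$ from \eqref{eq:L1u} and $u\in L^1(\RN)$ by Definition \ref{def:weak_sol_elliptic}). Dominated convergence then gives
\begin{equation*}
\int_{\RN}(u-u_\eps)_+\,\d x \le 0,
\end{equation*}
from which $u\le u_\eps$ a.e.\ in $\RN$ follows immediately.

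The main obstacle is the rigorous justification of the Stroock-Varopoulos step, exactly as in the parabolic analogue: inequality \eqref{Strook-Varopoulos} is originally proved only on a dense class of rapidly decaying functions (e.g.\ $\mathcal{S}(\RN)$), while $w=\phi(u)-\phi(u_\eps)$ need not decay at infinity. The remedy is to work with an intermediate approximation $w_j\in\mathcal{S}(\RN)$ obtained from compactly supported, smooth truncations $u_j, u_{\eps,j}$, apply \eqref{Strook-Varopoulos} there, and then pass to the limit in $j$ checking that the sign of the resulting inequality is preserved; a parallel spatial cut-off $\chi_R$ (with $R\to\infty$) handles the fact that $u$ is only in $L^1_+(\RN)$ and need not be bounded. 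These are precisely the technical steps that the authors gloss over in the parabolic case, and the argument here is structurally identical.
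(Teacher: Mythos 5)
Your proposal is correct and follows essentially the same route as the paper: subtract the two equations, test with $p_\delta(\phi(u)-\phi(u_\eps))$, discard the nonnegative Stroock--Varopoulos term and the non-positive $-\eps$ term, then let $\delta\searrow 0$ using $u,u_\eps\in L^1(\RN)$ and the monotonicity of $\phi$ to conclude $\int_{\RN}(u-u_\eps)_+\,\d x\le 0$. Your closing remark on justifying the Stroock--Varopoulos inequality by approximating with rapidly decaying functions matches the caveat the authors themselves make in the parabolic case, so nothing is missing.
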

\begin{proof}
The proof is similar to the proof of Theorem \ref{th:comparison}, therefore we just sketch it.
We take the difference between the equation solved by $u$ and the equation solved by
$u_\eps$ and then we test the equation with $p_\delta(\phi(u)-\phi(u_\eps))$, with $p_\delta$ the same
smooth approximation of the Heaviside function used in the proof of Theorem \ref{th:comparison}. We have
\begin{equation*}\begin{split}
\int_{\RN}(u-u_\eps)(x) & p_\delta(\phi(u)-\phi(u_\eps)) (x)\,\d x\nonumber\\
& + \int_{\RN}\big[p_\delta(\phi(u)-\phi(u_\eps))\big]\,\Ds(\phi(u)-\phi(u_\eps)) \d x\nonumber\\
&  = \int_{\RN}(-\eps)p_\delta(\phi(u)-\phi(u_\eps))(x)\d x \le 0.
  \end{split}
\end{equation*}
The second integral in the left hand side is positive thanks
to the Strook Varopoulos inequality \eqref{Strook-Varopoulos}. \normalcolor
On the other hand, since $u$ and $u_\eps$ are in $L^1(\RN)$, we can let $\delta\searrow 0$ and obtain
\begin{equation*}
\int_{\RN}(u-u_\eps)_{+}(x)\d x \le 0,\,\,\,\,\hbox{ i.e. } u\le u_\eps \,\,\,\hbox{ for a.a. } x\in \RN,
\end{equation*}
where we have used that $\hbox{sign}_+(u-u_\eps) = \hbox{sign}_+(\phi(u)-\phi(u_\eps))$.
\end{proof}\normalcolor


\section{Non Existence for the Dirichlet problem}\label{sec.comm}

In this last Section we discuss the question of existence/non existence
for solutions of a zero Dirichlet problem for \eqref{eq:FrUF}
in a bounded domain $\Omega\subset\mathbb{R}^N$.
In this case, the scenario
for existence is identical or worse
than the Cauchy problem in the whole space. In particular, for standard diffusion (i.e. $s=1$)
solutions do not exist for integrable
initial conditions in any dimension if $m\le 0$ (see \cite{JLVSmoothing}).

We recall that  the fractional Laplacian can have different nonequivalent expressions
 on bounded domains, hence we have
 to study two different Dirichlet problems.
 There are two choices of fractional laplacian on a domain are: the Spectral and Restricted
 Laplacian , cf. \cite{BV2015, BSV2015, BV-PPR2} for the definitions.
Then, the two different Dirichlet problems are as follows
\begin{equation}
\label{eq:FrUF_diri_re}
\begin{cases}
\partial_t u + \Ds(\phi(u)) = 0 \,\,\hbox{ in } \Omega\times (0,+\infty)\\
u = 0 \,\,\hbox{ in } \mathbb{R}^N\setminus \Omega,
\end{cases}
\end{equation}
or
\begin{equation}
\label{eq:FrUF_diri_spe}
\begin{cases}
\partial_t u + \mathcal{L}_s(\phi(u)) = 0 \,\,\hbox{  in } \Omega\times (0,+\infty)\\
u = 0 \hbox{ on } \partial\Omega.
\end{cases}
\end{equation}
where we have denoted with $\mathcal{L}_s$ the Spectral laplacian (see \cite{BSV2015}).\\

Independently of the choice of the type of fractional laplacian, we have the following
non-existence result
\begin{theor}
\label{th:noex_dirichlet}
Let $\Omega$ be a bounded domain in $\mathbb{R}^N$.
Let $u$ denote a nonnegative strong solution of \eqref{eq:FrUF_diri_re}
or of \eqref{eq:FrUF_diri_spe}
in the range of parameters $(N,s,n)$
allowed in Theorems $\ref{th:non ex}$ and $\ref{th:non_ex_1D}$.
Then, $u\equiv 0$ in $\RN\times (\tau, +\infty)$ for any $\tau>0$.
\end{theor}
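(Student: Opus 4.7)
The approach is to mimic the approximation scheme of Section \ref{sec.approx} on the bounded domain $\Omega$, construct a comparison between any strong solution and these approximations (analogous to Theorem \ref{th:comparison}), and then reduce the non-existence for the bounded-domain approximations to the whole-space results of Theorems \ref{th:non ex} and \ref{th:non_ex_1D} via a standard sub/super-solution argument.

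\textbf{Step 1 (Approximations on $\Omega$).} For each $\eps>0$ I would construct $u_\eps^\Omega = v_\eps + \eps$, where $v_\eps$ solves the auxiliary problem with the shifted nonlinearity $\phi_\eps(v):=\phi(v+\eps)-\phi(\eps)$ (which is smooth, monotone and vanishes at $0$) and homogeneous Dirichlet data, starting from $v_\eps(0) = u_0$. This means exterior vanishing for \eqref{eq:FrUF_diri_re} and trace zero for \eqref{eq:FrUF_diri_spe}. Existence, uniqueness, monotonicity in $\eps$, and the $L^1$-contraction/maximum principle follow from the bounded-domain theory developed in \cite{BV2015, BV-PPR2, BSV2015}. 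In particular $u_\eps^\Omega \ge \eps$ in $\Omega$, so the singular level is avoided.

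\textbf{Step 2 (Comparison with strong solutions).} Repeating the Stroock--Varopoulos computation of the proof of Theorem \ref{th:comparison} on $\Omega$ yields $u \le u_\eps^\Omega$ a.e.\ in $\Omega\times(0,\infty)$. The only extra point is the boundary contribution: since $u = 0 \le \eps = u_\eps^\Omega$ on $\partial\Omega$ (spectral case) or on $\R^N\setminus \Omega$ (restricted case), and since $u_0 \le u_0+\eps$ initially, the Stroock--Varopoulos inequality still gives the monotonicity of $t\mapsto \int_\Omega (u-u_\eps^\Omega)_+\,\mathrm{d}x$, which vanishes at $t=0$.

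\textbf{Step 3 (Reduction to the whole-space result).} Let $U_\eps$ be the whole-space approximation of Section \ref{sec.approx} corresponding to the initial datum $\tilde u_0+\eps$, where $\tilde u_0$ is $u_0$ extended by zero to $\R^N$. I claim $u_\eps^\Omega \le U_\eps$ on $\Omega\times(0,\infty)$. For the restricted case this is immediate: extending $u_\eps^\Omega$ by $\eps$ to $\R^N\setminus\Omega$ produces a subsolution of the whole-space equation which equals $U_\eps$ initially on $\Omega$ and is bounded above by $U_\eps\ge\eps$ on the exterior. For the spectral case, the analogous comparison is obtained by working directly on $\Omega$: the Green function of $\mathcal{L}_s$ is pointwise dominated by that of the restricted operator, which in turn is dominated by the Riesz kernel $c|x-y|^{-(N-2s)}$ (in the cases $N>2s$), so the Green-function identity of Lemma \ref{Lem.Green} carries over verbatim on $\Omega$ with the same uniform-in-$\eps$ bound on the right-hand side. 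Combining with the Aronson--B\'enilan inequality, which is preserved for these approximate bounded-domain solutions, gives the uniform blow-up $\phi(u_\eps^\Omega) \to -\infty$ hence $u_\eps^\Omega \to 0$ exactly as in the whole-space proof; the one-dimensional cases with $s\ge 1/2$ are even easier on $\Omega$, since the relevant Green function is bounded on the compact set.

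\textbf{Main obstacle.} Steps 1 and 2 are largely bookkeeping given the cited bounded-domain theory. The real technical point is Step 3 in the spectral setting, where the solution of \eqref{eq:FrUF_diri_spe} cannot be meaningfully extended to $\R^N$ as a solution of the whole-space equation, so one cannot simply quote Theorems \ref{th:non ex}/\ref{th:non_ex_1D}. One must instead redo the Green-function estimate of Lemma \ref{Lem.Green} with the spectral Green function $G^{sp}_\Omega$. Combining this with the already-established approximate comparison in Step 2 closes the argument and delivers $u \le \bar u := \lim_{\eps\to 0} u_\eps^\Omega \equiv 0$, proving the theorem in the full range of parameters.
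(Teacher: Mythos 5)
Your handling of the restricted Laplacian is essentially the paper's own proof: the paper also works with the domain approximations extended outside $\Omega$ (there $w_\eps=0$, i.e.\ $\bar w_\eps=w_\eps+\eps=\eps$ in $\R^N\setminus\Omega$), observes that $\partial_t w_\eps+\Ds\phi_\eps(w_\eps)\le 0$ at exterior points because $\phi_\eps\ge 0$, and concludes by the comparison chain $w\le\bar w_\eps\le u_\eps\to 0$, quoting Theorems \ref{th:non ex} and \ref{th:non_ex_1D} for the whole-space approximations. Where you genuinely diverge is the spectral case. The paper settles it in one line by invoking the known comparison, valid for smooth nonlinearities such as $\phi_\eps$, between solutions of the spectral Dirichlet problem and solutions of the Cauchy problem (cf.\ \cite{pqrv1,pqrv2}); no Green-function analysis on $\Omega$ is needed. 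You instead propose to rerun the potential-theoretic argument of Lemma \ref{Lem.Green} with the spectral Green function $G^{sp}_\Omega$ and the Aronson--B\'enilan estimate directly on the domain. That route is viable and, if carried out, actually proves more than the stated theorem (in 1D it would give non-existence on bounded domains beyond the Cauchy range, which is precisely the open question raised in the last section), at the price of being heavier than the paper's pure comparison argument.

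As written, though, your spectral branch has loose ends. You do not need, and should not assert without proof, a pointwise domination of $G^{sp}_\Omega$ by the restricted Green function; what the argument uses is only $G^{sp}_\Omega(x,y)\le c\,|x-y|^{2s-N}$ for $N>2s$, which follows from subordination of the Dirichlet heat semigroup and its domination by the whole-space kernel. The Aronson--B\'enilan inequality for the bounded-domain approximations is asserted but needs its own short proof: the scaling $\lambda u(x,\lambda^{-n}t)$ involves no spatial dilation, so the domain is preserved, but it raises the lateral/exterior datum from $\eps$ to $\lambda\eps\ge\eps$, and this must be absorbed by the comparison principle for the Dirichlet problem. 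Finally, for $N=1$, $s=1/2$ the domain Green function is logarithmic, not bounded; the claim survives because the kernel is integrable and the data can first be reduced to $L^\infty$ by the density argument, but the statement as you phrased it is inaccurate. With these points repaired your argument closes and yields $u\le\lim_{\eps\to0}u^\Omega_\eps\equiv 0$; the paper's proof is shorter because both the restricted and the spectral cases are reduced to the already established whole-space theorems by comparison alone.
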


The above Theorem will be proved using a comparison principle. In particular, it is known that
 for smooth nonlinearities (like our $\phi_\eps$) the solutions of the Dirichlet problem
for the Spectral laplacian lie below the solutions of the Cauchy problem,
cf. e.g. \cite{pqrv1, pqrv2}, hence by comparison we deduce the same non existence results of Theorem \ref{th:nonex-strong} also for the homogeneous Dirichlet problem.

Here we discuss the non existence for the problem with the Restricted Laplacian.
As expected, the proof of the Theorem is  based on a comparison argument
for the approximate solutions
of \eqref{eq:FrUF} and \eqref{eq:FrUF_diri_re}.
More precisely,
consider $u_0\in L^1(\RN)$ and fix some compact domain $\Omega'\subset \Omega$. Then,
for any $\eps>0$
we denote with $w_\eps$ the solution of
\begin{equation}
\label{eq:diri_approx_re}
\begin{cases}
\partial_t w_\eps + \Ds \phi_\eps(w_\eps) = 0\,\,\,\hbox{ in }
\Omega\times (0,+\infty),\\
w_\eps(x,0) = \chi_{\Omega'}(x)u_0(x)\,\,\, \hbox{ in } \RN.\\
w_\eps= 0 \,\,\,\,\hbox{ in } (\RN\setminus \Omega)\times (0,+\infty),
\end{cases}
\end{equation}
where, as in the previous section, $\phi_\eps(y):=\phi(y+\eps) -\phi(\eps)$.
We recall that the Restricted fractional Laplacian is defined,
for a smooth function $v$ supported in $\Omega$ as
\begin{equation*}
\Ds v(x) :=c(N,s)\hbox{p.v.}\int_{\RN}\frac{v(x)
-v(y)}{\vert x - y\vert^{N+2s}}\d y, \,\,\,\,\,\hbox{ for } x\in \RN.
\end{equation*}
The well posedness for
\eqref{eq:diri_approx_re}
follow from \cite{pqrv2, KL, BV2015, BSV2015, BV-PPR2}.
Incidentally, note that $\bar w_\eps:=w_\eps + \eps$ solves
\begin{equation*}
\label{eq:diri_approx_re2}
\begin{cases}
\partial_t \bar w_\eps + \Ds \phi(\bar w_\eps) = 0
\,\,\,\hbox{ in } \Omega\times (0,+\infty), \\
\bar w_\eps(x,0) = \chi_{\Omega'}(x)u_0(x) + \eps\,\,\, \hbox{ in } \RN,\\
\bar w_\eps= \eps \,\,\,\,\hbox{ in } (\RN\setminus \Omega)\times (0,+\infty).
\end{cases}
\end{equation*}
Moreover, for all points in $(\RN\setminus \Omega)\times (0,\infty)$, we have that
$w_\eps(x,t)=\phi_\eps(w_\eps)=0$, hence
$$
\partial_t w_\eps(x,t) + \Ds\phi_\eps(w_\eps(x,t)) = c(N,s)\hbox{p.v.}\int_{\RN}\frac{\phi_\eps(w_\eps(x,t))
-\phi_\eps(w_\eps(y,t))}{\vert x - y\vert^{N+2s}}\d y\le 0,
$$
where $\phi_\eps$ is a positive function. Consequently, we have that $w_\eps$
verifies
$$
\partial_t w_\eps + \Ds \phi_\eps(w_\eps) \le 0 \,\,\,\hbox{ in } \RN\times (0,+\infty),
$$
with equality when $x\in \Omega$. Thus, since $\chi_{\Omega'}u_0\le u_0$,
the comparison principle gives that
$ w_\eps \le v_\eps$ in $\RN\times (0,+\infty)$ (hence $\bar w_\eps\le u_\eps$)
where $v_\eps$
is the solution of \eqref{eq:FrUF_approx1} with $u_0$ as initial condition.
On the other hand, it is not difficult to show that any weak solution
of
\begin{equation*}
\begin{cases}
\partial_t w + \Ds \phi(w) = 0 \,\,\,\hbox{ in } \Omega\times (0,+\infty)\\
w = 0 \,\,\,\hbox{ in } (\RN\setminus\Omega)\times (0,+\infty)\\
w(x,0) = \chi_{\Omega'}(x)u_0(x)\,\,\,\hbox{ in } \RN,
\end{cases}
\end{equation*}
satisfies $w\le \bar w_\eps $ in $Q$. Thus, we have the chain of inequalities
$w\le \bar w_\eps \le u_\eps$ and hence
 by letting
$\eps\searrow 0$ and recalling Theorems \ref{th:non ex} and
\ref{th:non_ex_1D} we conclude.
\normalcolor

\section{Comments and open problems}\label{sec.dir}

$\bullet$
Our result for the Dirichlet problem may not be optimal in dimension one. In fact, since our proof (both for the Spectral and  for the Restricted Laplacian) follows by a comparison argument, we actually prove that the non-existence range for the Dirichlet problem is at least as large as  the non existence range for the Cauchy problem. Is it larger? this question remains an open problem. There is some evidence that may help. On the one hand, in the cases ($N=1, s=1/2, \, n=0$) and ($N=1, s>1/2, \, n<2s-1$) the Cauchy problem has a solution (see Section \ref{sec.exist.1D} in this paper and
the recent \cite{V-exist}). On the other hand, for the Dirichlet problem in the case $N=1$ and $s=1$ (namely, standard Laplacian)  non-existence holds for any exponent $n\ge 0$.

\

\noindent{\sc Acknowledgments. } Work partially supported by Spanish Project MTM2011-24696.
Work initiated during a stay of the authors at the Isaac Newton Institute of the University of Cambridge in the spring of 2014. A.S. have been supported by Gruppo Nazionale per l'Analisi Matematica la Probabilit\`a e le loro Applicazioni (GNAMPA) of the Istituto Nazionale di Alta Matematica (INDAM).

\addcontentsline{toc}{section}{~~~References}


\begin{thebibliography}{10}

\bibitem {BeTh72} {\sc P. B\'{e}nilan}.
{\it Equations d'\'evolution dans un espace de Banach quelconque
et applications}, Ph. D. Thesis, Univ. Orsay, 1972 (in French).

\bibitem{BBC}  {\sc P. B\'{e}nilan, H. Brezis, M. G. Crandall.} {\sl A semilinear equation in $L^1(\RN)$,} Ann. Scuola Norm. Sup. Pisa Cl. Sci.  \textbf{2} (1975), no. 4, 523--555.


\bibitem{BF2012} {\sc M. Bonforte, A. Figalli, }\textsl{Total Variation Flow and Sign Fast Diffusion in one dimension}, J. Diff. Eq. \textbf{252}, n.8, (2012), 4455--4480\,.

\bibitem{BSV2015} {\sc M. Bonforte,  Y. Sire, J. L. V\'azquez}, \textsl{Existence, Uniqueness and Asymptotic behaviour for fractional porous medium equations on bounded domains.} To appear in Discrete and Cont. Dyn. Sys. (2015).

\bibitem{BV2012} {\sc  M. Bonforte,  J.~L. V{\'a}zquez}.
{\sl Quantitative Local and Global  A Priori Estimates for Fractional Nonlinear Diffusion Equations.} Advances in Math. {\bf 250} (2014) 242--284.

\bibitem{BV2015} {\sc M. Bonforte,  J. L. V\'azquez.} {\sl A Priori Estimates  for Fractional Nonlinear  Degenerate Diffusion Equations  on bounded domains}. To appear in Arch. Rat. Mech. Anal. (2015). \texttt{http://arxiv.org/abs/1311.6997}

\bibitem{BV-PPR2} {\sc M. Bonforte,  J. L. V\'azquez.} {\sl Nonlinear degenerate diffusion equations on bounded
domains with restricted fractional Laplacian}. In preparation (2015). 



\bibitem{CS2007} {\sc L.A. Caffarelli, L. Silvestre, }An extension problem related to the fractional Laplacian, \textit{Comm. Partial Differential Equations,} \textbf{32} (2007) 1245–1260.


\bibitem {CL71} {\sc M.~G. Crandall, T.M. Liggett.}
{\sl Generation of semi-groups of nonlinear transformations on general
Banach spaces},  { Amer. J. Math.} {\bf 93} (1971) 265--298.


\bibitem{DDP1994} {\sc P. Daskalopoulos,   M. A. del Pino}.
{\sl On fast diffusion nonlinear heat equations and a related singular elliptic problem}, Indiana Univ. Math. J. 43 (1994), no. 2, 703--728.

\bibitem{DDP1995} {\sc P. Daskalopoulos,   M. A. del Pino}.
 {\sl  On a singular diffusion equation,} Comm. Anal. Geom. 3 (1995), no. 3-4, 523–542.

\bibitem{DDP1997} {\sc P. Daskalopoulos,   M. A. del Pino}.
 {\sl On nonlinear parabolic equations of very fast diffusion},
 Arch. Rational Mech. Anal. 137 (1997), no. 4, 363--380.

 \bibitem{pqrv1} {\sc A. De Pablo, F. Quir\'os, A. Rodr\'{\i}guez, J. L. V\'azquez.}
{\sl A fractional porous medium equation}, Advances in Mathematics
226 (2011), no. 2, 1378--1409.

\bibitem{pqrv2} {\sc A. De Pablo, F. Quir\'os, A. Rodr\'{\i}guez, J. L. V\'azquez.}
{\sl A general fractional porous medium equation}.
Comm. Pure Appl. Math. {\bf 65} (2012), no.~9, 1242--1284.

\bibitem{pqrv3} {\sc  A. de Pablo, F. Quir\'os, A. Rodr\'{\i}guez,   J.~L. V\'azquez}.
{\sl Classical solutions for a logarithmic fractional diffusion equation}.
Journal de Math. Pures Appliqu\'ees, to appear.

\bibitem{KL}{\sc S.~Kim, K.~A. Lee}.
{\sl H\"older estimates for singular nonlocal parabolic equations}.
Journal of Functional Analysis, {\bf 261}, (2011) 3482--3518.

\bibitem{land}{\sc N.~S. Landkof}.
{Foundations of modern potential theory},
Springer-Verlag, Berlin (1972)

\bibitem{ottoL1}
{\sc F.~Otto.}
{\sl  $L^1$-Contraction and Uniqueness for quasilinear elliptic-parabolic equations},
J. Differential Equations, {\bf 131}, 1996, 20--38.

\bibitem{PeFe} {\sc O.~Penrose, P.~C.~Fife}.
{\sl Thermodynamically consistent models of phase-field type for
              the kinetics of phase transitions},
              Phys.D {\bf 243} (1990), 44--62.

\bibitem{rev97} {\sc  A. Rodriguez, J.~L. V\'azquez, J.~R. Esteban}.
{\sl The maximal solution of the logarithmic fast diffusion equation in two space dimensions}, Adv. Differential Equations {\bf 2 } (1997), no. 6, 867--894.

\bibitem{SSZ12}{\sc G. Schimperna, A. Segatti, S. Zelik}.
{\sl Asymptotic uniform boundedness of energy solutions to the
              {P}enrose-{F}ife model}
J. Evol. Equ. {\bf 12} (2012), 863--890.


\bibitem{SSZ13}{\sc G. Schimperna, A. Segatti, S. Zelik}.
{\sl On a singular heat equation with dynamic boundary conditions,}
arXiv:1302.5026 [math.AP] (2013).


\bibitem{stein}{\sc Stein, E.~M.}
{``Singular integrals and differentiability properties of
              functions ''},
   Princeton Mathematical Series, No. 30 (1970).

\bibitem{VPQR13}
{\sc  J.~L.~V\'azquez,  A. de Pablo,  F. Quir\'os, A. Rodr\'iguez.}
{\sl Classical solutions and higher regularity for nonlinear fractional diffusion
equations},
preprint ArXiv 1311.7427 (2013).

\bibitem{V-exist} {\sc J.~L.~V\'azquez}.
{\sl Existence of maximal solutions for some very singular nonlinear fractional diffusion equations in 1D}. Preprint 2015.

\bibitem{Vaz92}
{\sc J.~L.~V\'azquez}.
{\sl Nonexistence of solutions for nonlinear heat equations of fast-diffusion type},
J. Math. Pures. Appl. {\bf 71}, 1992, 503--526.

\bibitem{Vapme} {\sc J.~L. V\'azquez}.
 {``The Porous Medium Equation. Mathematical Theory''},
 Oxford Mathematical Monographs. The Clarendon Press, Oxford University Press, Oxford  (2007).


\bibitem{JLVSmoothing}{\sc J.~L. V\'azquez}.
 \lq\lq Smoothing and Decay Estimates for Nonlinear Diffusion Equations. Equations of Porous Medium
Type''. Oxford Lecture Series in Mathematics and its Applications,
33. Oxford University Press, Oxford, 2006.



\bibitem{VazBar}{\sc J.~L. V\'azquez}. {\sl Barenblatt  solutions and asymptotic behaviour for a  nonlinear fractional heat equation of porous medium type. }   J. Eur. Math. Soc. {\bf 16} (2014), 769--803.

\bibitem{VazVol1} {\sc J.~L. V\'{a}zquez,   B.~Volzone}.
{\sl Symmetrization for Linear and Nonlinear Fractional  Parabolic Equations of Porous Medium Type,} Journal Math. Pures Appl. (9) {\bf 101} (2014),  no. 5, 553--582.

\bibitem{VazVol2} {\sc J.~L. V\'{a}zquez,   B.~Volzone}.
{\sl   Optimal estimates for Fractional  Fast diffusion equations}. J. Math. Pures Appl. {\bf 103} (2015), 535--556.


\end{thebibliography}
\end{document}